\documentclass[11pt,a4paper,french,english]{article}
\usepackage[top=3cm, bottom=3cm, left=3cm, right=3cm]{geometry}
\usepackage[T1]{fontenc}
\usepackage{lmodern}
\usepackage{babel}
\usepackage[latin1]{inputenc}
\usepackage{csquotes}
\usepackage{amsmath}
\usepackage{amsthm}
\usepackage{amsfonts}
\usepackage{amssymb}
\usepackage{graphics}
\usepackage{float}
\usepackage[hang,flushmargin]{footmisc} 

\usepackage{amsmath,amsthm,amssymb,graphicx, multicol, array}
\usepackage{enumerate}
\usepackage{enumitem}
\usepackage{xcolor}
\usepackage{multicol}

\usepackage[pagebackref,bookmarks,colorlinks,breaklinks,linktoc=page]{hyperref}
\hypersetup{linkcolor=blue,citecolor=red,filecolor=blue,urlcolor=blue} 

\usepackage{tabto}

\numberwithin{equation}{section}

\usepackage{tikz}
\usepackage{pgfplots}

\usetikzlibrary{matrix}

\usepackage{mathrsfs}

\DeclareMathOperator{\ord}{ord}

\newtheorem{lem}{Lemma}[section]
\newtheorem{theorem}{Theorem}[section]
\newtheorem{lemma}{Lemma}[section]

\newtheorem{exa}{Example}[section]

\newtheorem{exe}{Exercise}[section]

\newcommand{\N}{\mathbb{N}}
\newcommand{\Z}{\mathbb{Z}}

\newcommand{\C}{\mathbb{C}}
\newcommand{\F}{\mathbb{F}}

\usepackage{fancyhdr}
\title{Least Consecutive Pair of Primitive Roots}
\date{}
\author{N. A. Carella}

\begin{document}
\maketitle

\begin{abstract}
Let $p>1$ be a large prime number and let $x=O((\log p)^2(\log\log p)^5$ be a real number. It is proved that the least consecutive pair of primitive roots $u\ne\pm1, v^2$ and $u+1$ satisfies the upper bound $u\ll x$ in the prime field $\mathbb{F}_p$. 
\let\thefootnote\relax\footnote{ \today \date{} \\
	\textit{AMS MSC2020}: Primary 11A07, 11N05; Secondary 11N32 \\
	\textit{Keywords}: Primitive root mod $p$; Least prime Primitive root; Arithmetic progression; Complexity theory; Finite field.}
\end{abstract}


\section{Introduction }\label{S3355B}
There are many results on the existence of pairs of consecutive primitive roots $u$ and $u+1$ modulo a prime $p$. The general theory on the existence of $k$-tuples of consecutive primitive roots 
\begin{equation}
u,\quad u+1,\quad u+2,\quad  \ldots, \quad u+k-1,
\end{equation}
in the prime finite field $\F_p$ for $k\ll\log p$ has a large and well established literature. The current literature covers various results on the existence of consecutive pairs of primitive roots and $k$-consecutive primitive roots, see \cite{CL1956}, \cite{VE1968}, \cite{SM1975}, \cite{TT2013}, \cite{LD2015}, \cite{CN2019}, et alii. In \cite{CZ1998} it is shown that consecutive pairs of primitive roots have a Poisson distribution in certain sequence of primes. In {\color{red}\cite[Theorem 1.1]{LR2022}} some results on the existence of $k$-tuples of primitive roots in arithmetic progressions
\begin{equation}
\alpha,\quad \alpha+1\cdot \beta,\quad \alpha+2\cdot \beta,\quad  \ldots,\quad \alpha+(k-1)\cdot \beta,
\end{equation}
in the finite field $\F_{q^n}$, where $\alpha,\beta\in \F_q$, $q$ is a prime power and $n\geq2$, are proved. The asymptotic formula for the counting function for the number of $k$-tuples of consecutive primitive roots 
\begin{align}\label{eq3355NE.100d}
N_k(p)&=c_k(p)\left (\frac{\varphi(p-1)}{p-1}\right )^k\cdot (p-1)+O(p^{1/2+\varepsilon}),
\end{align}
where $c_k(p)>0$ is a density constant, is nearly the same as the product of $k$ independent random primitive roots in the finite field $\mathbb{F}_p$.  \\

This note explores a new topic: the least consecutive pairs of primitive roots, and more generally the least $k$-tuples of consecutive primitive roots. 

\begin{theorem} \label{thm3355DD.850A}\hypertarget{thm3355DD.850A} Let $p>1$ be a large prime number and let $x=O(\log p)^2(\log\log p)^5$ be a small real number. Then there exists a pair of consecutive primitive root $$2\leq u\quad \text{ and }\quad u+1\ll x$$ 
in the prime finite field $\F_p$, unconditionally.
\end{theorem}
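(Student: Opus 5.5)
The plan is to detect primitive roots with an exact indicator function, count pairs $(u,u+1)$ with $u\le x$ by a double sum, and show the main term beats the error term when $x\asymp(\log p)^2(\log\log p)^5$. Fix a primitive root $\tau$ of $\F_p$. For $u\in\F_p^\times$ use the additive-character representation
\begin{equation*}
\Psi(u)=\frac{1}{p}\sum_{\substack{1\le n<p-1\\ \gcd(n,p-1)=1}}\ \sum_{0\le k\le p-1} e^{2\pi i k(\tau^n-u)/p},
\end{equation*}
which equals $1$ if $u$ is a primitive root and $0$ otherwise. The multiplicative form
\begin{equation*}
\Psi(u)=\frac{\varphi(p-1)}{p-1}\sum_{d\mid p-1}\frac{\mu(d)}{\varphi(d)}\sum_{\operatorname{ord}\chi=d}\chi(u)
\end{equation*}
would serve equally well. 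Then consider
\begin{equation*}
N(x)=\sum_{2\le u\le x,\ u\neq v^2}\Psi(u)\Psi(u+1).
\end{equation*}
Excluding squares (and $\pm1$) only removes $O(\sqrt{x})$ terms, which are never primitive roots anyway, so it is harmless. The goal is $N(x)>0$.

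Expand the product and separate the term $k_1=k_2=0$ (equivalently, the pair of trivial characters). This gives the main term
\begin{equation*}
M(x)=\Big(\frac{\varphi(p-1)}{p-1}\Big)^2 x+O(1).
\end{equation*}
The remainder $E(x)$ is a sum over nontrivial pairs. In the multiplicative language it is
\begin{equation*}
E(x)=\Big(\frac{\varphi(p-1)}{p-1}\Big)^2\sum_{\substack{d_1,d_2\mid p-1\\ (d_1,d_2)\neq(1,1)}}\frac{\mu(d_1)\mu(d_2)}{\varphi(d_1)\varphi(d_2)}\sum_{\operatorname{ord}\chi_i=d_i}\ \sum_{u\le x}\chi_1(u)\chi_2(u+1).
\end{equation*}
Two elementary inputs control the sizes:
\begin{itemize}
\item the lower bound $\varphi(p-1)/(p-1)\gg 1/\log\log p$;
\item the bound $\sum_{d\mid p-1}\mu^2(d)=2^{\omega(p-1)}\le (\log p)^{c/\log\log\log p}$.
\end{itemize}
With these, $M(x)\gg x/(\log\log p)^2$. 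It therefore suffices to show
\begin{equation*}
|E(x)|=o\big(x/(\log\log p)^2\big).
\end{equation*}
The extra factor $(\log\log p)^5$ in the choice of $x$ is there to absorb these $\log\log$ losses together with the $2^{\omega(p-1)}$ loss.

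The main obstacle is bounding the inner mixed character sums $\sum_{u\le x}\chi_1(u)\chi_2(u+1)$ over an interval of length only about $(\log p)^2$. The Weil bound gives $O(\sqrt{p}\log p)$ for such sums, and Burgess requires $x>p^{1/4+\varepsilon}$, so neither is usable here. Unconditionally, nothing this strong is known even for a single primitive root; the least primitive root at the scale $(\log p)^2$ is only known under GRH.

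My first attempt at this step would be to work with the additive representation. There the $n$-sum over $\gcd(n,p-1)=1$ becomes an exponential sum $\sum_n e^{2\pi i k\tau^n/p}$ over the orbit of $\tau$, and I would try to use square-root cancellation in that $n$-sum: bound it by $O(p^{1/2}\log p)$, or something sharper on average over $k$. I would then pair this with the trivial geometric-series bound for the short $u$-sum, $\big|\sum_{u\le x}e^{-2\pi i ku/p}\big|\ll \min(x,\ p/k)$.

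I would then check carefully whether the resulting product genuinely gives $E(x)=o(M(x))$. I expect this to be where the argument either succeeds through some additional averaging (for instance over the shift, or over $k$ in dyadic ranges) or fails, in which case a conditional input such as GRH would have to be invoked. Everything else—the main term, and handling the excluded $u=\pm1,v^2$—is routine.
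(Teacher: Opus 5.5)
You correctly identify the obstacle, but the proposal stops there, so it is not a proof. Your setup is the same as the paper's: the additive indicator of primitive roots, the main term $(\varphi(p-1)/p)^2x$ from the zero frequencies, and $\varphi(p-1)/(p-1)\gg 1/\log\log p$. Everything then hinges on showing $E(x)=o(M(x))$, which you leave open, and the route you sketch cannot supply it.

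Write $\Psi(u)-\varphi(p-1)/p=\frac{1}{p}\sum_{k\neq 0}e^{-2\pi i ku/p}G(k)$ with $G(k)=\sum_{\gcd(n,p-1)=1}e^{2\pi i k\tau^n/p}$. Combine the pointwise bound $|G(k)|\ll 2^{\omega(p-1)}p^{1/2}$ with $|\sum_{u\le x}e^{-2\pi i ku/p}|\ll\min(x,p/\|k\|)$, where $\|k\|$ is the distance from $k$ to the nearest multiple of $p$. This gives only $\sum_{u\le x}(\Psi(u)-\varphi(p-1)/p)\ll p^{1/2+\varepsilon}$, which is nontrivial only for $x>p^{1/2+\varepsilon}$. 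For the pair, estimating the $(k_1,k_2)$-sum in absolute value gives only $O(p^{1+\varepsilon})$, worse than the trivial bound $x$.

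Averaging over $k$ does not help either. Even the exact mean square $\sum_{k}|G(k)|^2=p\varphi(p-1)$ with Cauchy--Schwarz yields only $\ll(x\varphi(p-1))^{1/2}$, which exceeds $x$. So no argument using only the sizes of the $G(k)$ can work. You need genuine cancellation in $\sum_{u\le x}\chi_1(u)\chi_2(u+1)$ for $x$ of polylogarithmic size, and no such unconditional estimate is known. Indeed, taking $u$ alone, the statement would give an unconditional bound $(\log p)^2(\log\log p)^5$ for the least primitive root, whereas Burgess-type bounds $p^{1/4+\varepsilon}$ are the best known.

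Separately, your claim $2^{\omega(p-1)}\le(\log p)^{c/\log\log\log p}$ is false. $2^{\omega(p-1)}$ is typically about $(\log p)^{\log 2}$, and for infinitely many $p$ it is as large as $p^{c/\log\log p}$. So the factor $(\log\log p)^5$ cannot absorb the losses of the multiplicative expansion.

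You also cannot close the gap by following the paper, whose treatment of $E(x)$ is not valid.
\begin{itemize}
\item Lemma \ref{lem9933ERP.230V} asserts that $\sum_{1\le b\le p-1}e^{-2\pi i ab/p}\sum_{\gcd(n,p-1)=1}e^{2\pi i b\tau^n/p}=O(p^{1/2+\delta}(\log p)^2)$ for non-primitive $a$. But \eqref{eq9955P.230a}, just before it, correctly evaluates the same sum as $-\varphi(p-1)$, of modulus $\gg p/\log\log p$.
\item The proof of Lemma \ref{lem9955SD.750} simply discards the $n\le p/x$ piece.
\item In the two-dimensional part, $E(x,S_{01})=E(x,S_{10})=0$ is deduced from $\tau^{n_2}\neq u+1$ and $\tau^{n_1}\neq u$ for every $u\le x$. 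This does not follow from ``$u$ and $u+1$ are not both primitive roots''. Moreover $S_{11}=S_{01}\cap S_{10}$, so these sets do not partition the domain.
\item The bound for $E(x,S_{11})$ replaces $|\sum_u A(u)B(u)|$ by $|\sum_u A(u)|\cdot|\sum_u B(u)|$ and turns $u+1$ into $u$. It then applies Lemma \ref{lem9955SD.300E}, whose hypothesis (no primitive root at all up to $x$) is unavailable.
\end{itemize}
Finally, under the contradiction hypothesis $N_2(x)=0$ one has $E(x)=-M(x)$ identically. Bounding $E(x)$ below $M(x)$ is therefore the entire theorem, not a lemma. As it stands, neither your proposal nor the paper's argument establishes the statement.
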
	
\vskip .1 in
There is no literature on this topic. Thus, it is unknown if this is the best possible upper bound for the least consecutive pair of primitive roots. The proof of this result seamlessly extends to the $k$-tuple case. 

\begin{theorem} \label{thm3355DD.850K}\hypertarget{thm3355DD.850K} Let $p>1$ be a large prime number and let $x=O(\log p)^k(\log\log p)^{k+3}$ be a small real number. If $ k\ll \log p$, then there exists a $k$-tuple of consecutive primitive root $$u,\quad u+1,\quad u+2,\quad  \ldots, \quad u+k-1\ll x$$ 
in the prime finite field $\F_p$, unconditionally.
\end{theorem}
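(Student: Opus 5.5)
The plan is to count $k$-tuples of consecutive primitive roots in the short interval $[2,x]$ with the standard characteristic function of primitive roots, and to show the count is positive. Write
\[
\Psi(n)=\frac{\varphi(p-1)}{p-1}\sum_{d\mid p-1}\frac{\mu(d)}{\varphi(d)}\sum_{\operatorname{ord}\chi=d}\chi(n),
\]
which equals $1$ when $n$ is a primitive root modulo $p$ and $0$ otherwise. Consider
\[
N_k(x,p)=\sum_{2\le n\le x}\Psi(n)\Psi(n+1)\cdots\Psi(n+k-1).
\]
Expanding the product gives a main term from the choice $d_1=\cdots=d_k=1$. This term equals
\[
\left(\frac{\varphi(p-1)}{p-1}\right)^k(x-1).
\]
The remaining terms form an error term. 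It is a sum over tuples $(d_1,\ldots,d_k)$ of divisors of $p-1$, not all $1$, of incomplete mixed character sums
\[
\sum_{n\le x}\chi_1(n)\chi_2(n+1)\cdots\chi_k(n+k-1).
\]

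Since $x$ is only polylogarithmic in $p$, neither Weil nor Burgess bounds on these incomplete sums help directly. Burgess requires $x>p^{1/4+\varepsilon}$. So I would not try to make the error small through cancellation. Instead I would restrict to square-free kernels and sieve. Only tuples of squarefree $d_i$ contribute. The weights $\mu(d)/\varphi(d)$ suggest bounding the error by $\prod_i \sum_{d\mid p-1}\mu^2(d)/\varphi(d)\cdot(\text{trivial})$. That bound is too large, so the real step is a sieve decomposition: replace $\Psi$ by a truncated version that only tests $q$-th power nonresidue for primes $q\mid p-1$, $q\le z$, and handle large $q$ separately.

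For small primes $q\le z$ with $q\mid p-1$, the conditions "$n+j$ is not a $q$-th power" for $j<k$ give a local density. One then needs equidistribution of the tuple of $q$-th power residue symbols over $n\le x$ for the $\omega_z(p-1)\ll\log z$ small primes. This is the step I expect to be the main obstacle. The hope is that a combinatorial or fundamental-lemma-type sieve gives the main term with density $\prod_{q\le z}(1-1/q)^k\gg(\log\log p)^{-k}$, up to an acceptable error. This is where the extra factor $(\log\log p)^{k+3}$ in $x$ should come from.

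For large primes $q>z$ dividing $p-1$, I would bound the number of $n\le x$ with some $n+j$ a $q$-th power trivially. The number of $q$-th powers in $[1,x]$ modulo $p$ is at most roughly $x/q+O(\ldots)$ on average. Summing over the at most $\log p$ primes $q>z$ gives a loss of roughly $kx\sum_{q>z,\,q\mid p-1}1/q$. With $z$ chosen as a power of $\log p$, this is $o(x(\log\log p)^{-k})$.

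Finally I would compare the main term $\asymp x(\log\log p)^{-k}$ with the accumulated errors. For the pair case, $k=2$ with $x\asymp(\log p)^2(\log\log p)^5$, the main term dominates. Hence $N_2(x,p)>0$, and there are consecutive primitive roots $u,u+1\ll x$. The conditions $u\ne\pm1$ and $u$ not a square are automatic for a primitive root with $p$ large. The $k$-tuple statement, Theorem \ref{thm3355DD.850K}, is the same computation with exponent $k$.
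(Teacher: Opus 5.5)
Your proposal has a genuine gap, and it is exactly the step you call the main obstacle. A sieve (fundamental lemma, Brun, Selberg) does not produce equidistribution; it requires it as input. To sieve by the primes $q\le z$ dividing $p-1$, you need the following for each squarefree $d$ built from these $q$: the number of $n\le x$ with $n,n+1,\dots,n+k-1$ in prescribed cosets of $(\F_p^{\times})^{d}$ must equal the expected proportion of $x$ up to a small remainder. By orthogonality these remainders are exactly the incomplete sums $\sum_{n\le x}\chi_1(n)\chi_2(n+1)\cdots\chi_k(n+k-1)$, which you correctly said are out of reach for polylogarithmic $x$. So the sieve only moves the difficulty elsewhere.

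The large-$q$ step has the same defect. The claim that roughly $x/q$ of the $n\le x$ are $q$-th powers is not a trivial bound; the trivial bound is $x$. It is again an equidistribution statement on an interval of length $(\log p)^{O(1)}$. Already the simplest instance, $q=2$ and a single $n$, asks for a quadratic non-residue below $(\log p)^{O(1)}$. Every primitive root is a non-residue, so the statement for any fixed $k$ implies this. That is open unconditionally: the best known bound is Burgess's $p^{1/(4\sqrt{e})+\varepsilon}$, and $O((\log p)^2)$ is known only under GRH. No choice of $z$ or sieve bookkeeping can close the gap with present tools. Methods of this type give results on average over $p$ (as in Burgess--Elliott), not for every large $p$. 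Note also that the shape $(\log p)^k(\log\log p)^{k+3}$ never comes out of your computation. Your closing comparison for $k=2$ is likewise made without any error bound in hand.

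The paper argues differently. It uses the additive-character representation of Lemma~\ref{lem9955.200A} and takes $s_1=s_2=0$ as the main term $(\varphi(p-1)/p)^2x$. It claims the terms with exactly one $s_i=0$ vanish. It bounds the rest by a product of two copies of the one-dimensional estimate $E(x)\ll(\log x)(\log p)$ of Lemma~\ref{lem9955SD.300E}. It only asserts that the $k$-tuple case extends.

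This avoids sieving but does not remove your obstruction, so it will not repair your argument:
\begin{itemize}
\item Under the hypothesis of Lemma~\ref{lem9955SD.300E} (no primitive root $u\le x$), the one-dimensional error equals exactly $-\frac{\varphi(p-1)}{p}(\lfloor x\rfloor-1)$. So that lemma is the least-primitive-root problem restated; it would give a primitive root $\ll(\log p)(\log\log p)^2$ unconditionally.
\item The bound $O(p^{1/2+\delta}(\log p)^2)$ in Lemma~\ref{lem9933ERP.230V} contradicts the exact value $-\varphi(p-1)$ displayed just before it.
\item The proof of Lemma~\ref{lem33355DD.350C} replaces $\sum_u f(u)g(u)$ by $\bigl(\sum_u f(u)\bigr)\bigl(\sum_u g(u)\bigr)$.
\item Even granting these bounds, the extension to $k$-tuples fails for $k\ge4$. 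With $x=(\log p)^k(\log\log p)^{k+3}$ one has $(\log x)^k(\log p)^k\ge k^k(\log\log p)^k(\log p)^k$. This exceeds the guaranteed size $c^k(\log p)^k(\log\log p)^3$ of the main term.
\end{itemize}
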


\hyperlink{thm3355DD.850A}{Theorem} \ref{thm3355DD.850A} appears in \hyperlink{S3355M-T}{Section} \ref{S3355M-T}. This result reduces the  search algorithm in finite field $\mathbb{F}_p$ for small pair of consecutive primitive roots from exponential time complexity to polynomial time complexity if the prime factorization of the totient $p-1$ is provided.

\section{Representations of the Characteristic Function}\label{S9955D}\hypertarget{S9955D}
The \textit{multiplicative order} of an element in a finite field is defined by $\ord_p u=\min\{k:u^k\equiv 1 \bmod p\}$. An element $u\ne \pm1,v^2$ is called a primitive root if $\ord_p u=p-1.$ The characteristic function \(\Psi :G\longrightarrow \{ 0, 1 \}\) of primitive elements is one of the standard analytic tools employed to investigate the various properties of primitive roots in cyclic groups \(G\). Many equivalent representations of the characteristic function $\Psi $ of primitive elements
are possible, a few are investigated here. 
\subsection{Divisor Dependent Characteristic Function}		
The divisor dependent characteristic function was developed about a century ago. The authors in \cite{DH1937} and \cite{WR2001} attributed this formula to Vinogradov, and other authors have attributed it to Landau, see {\color{red}\cite[Theorem 496]{LE1927}}, {\color{red}\cite[p.\; 258]{LN1997}}, et alia. This characteristic function detects the order of an element by means of the divisors of the totient $p-1$. The precise description is stated below.

\begin{lemma} \label{lem9955.200D} \hypertarget{lem9955.200D} Let \(p\geq 2\) be a prime and let \(\chi\) be a multiplicative character of order $\ord  \chi =d$. If \(u\in
	\mathbb{F}_p\) is a nonzero element, then
	\begin{equation}
		\Psi (u)=\frac{\varphi(p-1)}{p-1}\sum _{d\mid p-1} \frac{\mu(d)}{\varphi(q)}\sum _{\ord \chi =d} \chi(u)
		=\left \{
		\begin{array}{ll}
			1 & \text{ if } \ord_p (u)=p-1,  \\
			0 & \text{ if } \ord_p (u)\neq p-1, \\
		\end{array} \right .\nonumber
	\end{equation}
	where $\mu:\N\longrightarrow \{-1,0,1\}$ is the Mobius function.
\end{lemma}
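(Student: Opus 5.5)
The plan is to reduce the claimed identity to an evaluation of Ramanujan sums, working in the cyclic group $\mathbb{F}_p^\times$ of order $n=p-1$. In the displayed formula the inner normalisation should read $\varphi(d)$, the number of characters of order $d$; the $\varphi(q)$ is a typo, and I will prove the statement with $\varphi(d)$.

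\textbf{Step 1 (parametrise $u$ and the characters).} Fix a generator $g$ of $\mathbb{F}_p^\times$ and write $u=g^k$ with $0\le k<n$. Then $\ord_p u=p-1$ if and only if $\gcd(k,n)=1$. Every multiplicative character has the form $\chi_j(g^k)=e^{2\pi i jk/n}$, and $\chi_j$ has order $d=n/\gcd(j,n)$. So the characters of order $d\mid n$ are the $\chi_j$ with $j=(n/d)a$, $1\le a\le d$, $\gcd(a,d)=1$. This gives
\begin{equation}
\sum_{\ord\chi=d}\chi(u)=\sum_{\substack{1\le a\le d\\ \gcd(a,d)=1}} e^{2\pi i ak/d}=c_d(k),\nonumber
\end{equation}
which is the Ramanujan sum. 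I will use its classical values at a prime $\ell$: $c_\ell(k)=\ell-1$ if $\ell\mid k$, and $c_\ell(k)=-1$ otherwise. Its multiplicativity in $d$ for fixed $k$ follows from the Chinese remainder theorem.

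\textbf{Step 2 (factor the divisor sum).} Because of $\mu(d)$, only squarefree $d\mid n$ contribute. The summand $\mu(d)c_d(k)/\varphi(d)$ is multiplicative in $d$, so the sum factors over the distinct primes $\ell\mid n$:
\begin{equation}
\sum_{d\mid n}\frac{\mu(d)}{\varphi(d)}c_d(k)=\prod_{\ell\mid n}\Bigl(1-\frac{c_\ell(k)}{\ell-1}\Bigr).\nonumber
\end{equation}

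\textbf{Step 3 (evaluate).} Consider the factor at a prime $\ell\mid n$.
\begin{itemize}
\item If $\ell\mid k$, the factor is $1-1=0$.
\item If $\ell\nmid k$, the factor is $1+\tfrac{1}{\ell-1}=\tfrac{\ell}{\ell-1}$.
\end{itemize}
Hence the product vanishes unless $\gcd(k,n)=1$. In that case it equals $\prod_{\ell\mid n}\ell/(\ell-1)=n/\varphi(n)$. Multiplying by the prefactor $\varphi(p-1)/(p-1)$ gives exactly $1$ when $\ord_p u=p-1$ and $0$ otherwise, which is the claim.

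\textbf{Main obstacle.} The only step needing care is Step 1: correctly identifying the set of characters of order exactly $d$ with the reduced residues $a \bmod d$. Once that identification is in place, the inner sum is the Ramanujan sum and the rest is a mechanical Euler-product computation.
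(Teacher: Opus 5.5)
Your argument is correct and complete. You are also right that $\varphi(q)$ must be read as $\varphi(d)$, the number of characters of order $d$.

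The paper gives no proof of this lemma. It only attributes the formula to Vinogradov/Landau and cites Landau and Lidl--Niederreiter, so there is no in-paper argument for yours to match.

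Your route is a direct verification:
\begin{itemize}
\item after fixing a generator, you identify $\sum_{\ord\chi=d}\chi(u)$ with the Ramanujan sum $c_d(k)$;
\item multiplicativity collapses the divisor sum into the Euler product $\prod_{\ell\mid p-1}\bigl(1-c_\ell(k)/(\ell-1)\bigr)$;
\item each factor is $0$ or $\ell/(\ell-1)$, which shows why $\varphi(p-1)/(p-1)$ is exactly the right normalisation.
\end{itemize}

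The classical derivation instead \emph{produces} the formula, without choosing coordinates. It uses inclusion--exclusion over the prime divisors of $n=p-1$, together with orthogonality of characters on the subgroup of $e$-th powers:
\begin{equation*}
\Psi(u)=\sum_{e\mid n}\mu(e)\,\mathbf{1}\bigl[u\in(\F_p^\times)^e\bigr]=\sum_{e\mid n}\frac{\mu(e)}{e}\sum_{\chi^e=1}\chi(u).
\end{equation*}
Regrouping by $d=\ord\chi$ then gives the weight $\sum_{d\mid e\mid n}\mu(e)/e=\frac{\varphi(n)}{n}\cdot\frac{\mu(d)}{\varphi(d)}$.

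That version explains where the weights $\mu(d)/\varphi(d)$ come from, and it never evaluates the inner character sums. Yours takes the weights as given and checks them by explicit computation of Ramanujan sums at primes. Both use only that $\F_p^\times$ is cyclic of order $p-1$, so either argument works verbatim in any finite cyclic group.
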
	

\subsection{Divisorfree Characteristic Function}	
A new \textit{divisors-free} representation of the characteristic function of primitive element is developed here. It detects the order \(\text{ord}_p
(u) \geq 1\) of the element \(u\in \mathbb{F}_p\) by means of the solutions of the equation \(\tau ^n-u=0\) in \(\mathbb{F}_p\), where
\(u,\tau\) are constants, and $n$ is a variable such that \(1\leq n<p-1, \gcd (n,p-1)=1\). 

\begin{lemma} \label{lem9955.200A} \hypertarget{lem9955.200A} Let \(p\geq 2\) be a prime and let \(\tau\) be a primitive root mod \(p\) and  let \(\psi \neq 1\) be a nonprincipal additive character of order $\ord  \psi =p$. If \(u\in
	\mathbb{F}_p\) is a nonzero element, then
	\begin{equation}
		\Psi (u)=\sum _{\gcd (n,p-1)=1} \frac{1}{p}\sum _{0\leq s\leq p-1} \psi \left ((\tau ^n-u)s\right)
		=\left \{
		\begin{array}{ll}
			1 & \text{ if } \ord_p (u)=p-1,  \\
			0 & \text{ if } \ord_p (u)\neq p-1. \\
		\end{array} \right .\nonumber
	\end{equation}
\end{lemma}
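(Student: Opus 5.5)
We prove Lemma \ref{lem9955.200A} by evaluating the inner additive-character sum with the orthogonality relation, which turns the double sum into a count. After that, the remaining task is to show this count is $1$ or $0$ according as $u$ is a primitive root or not.

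\textbf{Step 1 (orthogonality).} For a fixed nontrivial additive character $\psi$ of $\F_p$ and any $a\in\F_p$, we have
\begin{equation*}
\frac{1}{p}\sum_{0\leq s\leq p-1}\psi(as)=\begin{cases}1 & a=0,\\ 0 & a\neq 0.\end{cases}
\end{equation*}
This holds because $s\mapsto\psi(as)$ is a nontrivial character of $(\F_p,+)$ whenever $a\ne0$. Applying it with $a=\tau^n-u$ for each $n$, the whole expression becomes
\begin{equation*}
\#\{\,1\leq n\leq p-1:\ \gcd(n,p-1)=1,\ \tau^n=u \text{ in } \F_p\,\}.
\end{equation*}
The range of $n$ should be read as a full set of residues modulo $p-1$, since the summation in the statement is over $\gcd(n,p-1)=1$ with $1\le n<p-1$.

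\textbf{Step 2 (counting the solutions).} Since $\tau$ is a primitive root, the map $n\mapsto\tau^n$ is a bijection from $\Z/(p-1)\Z$ onto $\F_p^\times$. So for nonzero $u$ there is exactly one $n_0\bmod (p-1)$ with $\tau^{n_0}=u$, and the count equals $1$ if $\gcd(n_0,p-1)=1$ and $0$ otherwise.

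\textbf{Step 3 (matching with the order).} We use the standard identity
\begin{equation*}
\ord_p(\tau^{n_0})=\frac{p-1}{\gcd(n_0,p-1)}.
\end{equation*}
It follows from $(\tau^{n_0})^k=1\iff (p-1)\mid n_0k$. Hence $\ord_p(u)=p-1$ exactly when $\gcd(n_0,p-1)=1$, which gives the two cases in the statement.

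There is no real obstacle here. The only point needing care is the bookkeeping: $n$ must range over a complete set of residues modulo $p-1$ coprime to $p-1$, so that the unique $n_0$ is neither missed nor counted twice. This is harmless because $n\equiv 0$ is never coprime to $p-1$ once $p>2$; the case $p=2$ can be checked by hand.
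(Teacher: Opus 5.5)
Your proposal is correct and follows essentially the same route as the paper: orthogonality of the additive character (the paper's geometric-series step) collapses the inner sum, and the primitive roots are parametrized as $\tau^n$ with $\gcd(n,p-1)=1$. Your version is more complete than the paper's. You make explicit that $n_0$ is unique modulo $p-1$, which is what gives the value exactly $1$ rather than just ``a solution exists''. You also derive the coprimality criterion from $\ord_p(\tau^{n_0})=(p-1)/\gcd(n_0,p-1)$, whereas the paper only asserts it.
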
	
\begin{proof}[\textbf{Proof}] As the index \(n\geq 1\) ranges over the integers relatively prime to \(p-1\), the element \(\tau ^n\in \mathbb{F}_p\) ranges over the primitive roots modulo $p$. Ergo, the equation \(\tau ^n- u=0\) has a solution if and only if the fixed element \(u\in \mathbb{F}_p\) is a primitive root. Next, replace \(\psi (z)=e^{i 2\pi  k z/p }\) to obtain 
	
	\begin{equation}
		\sum_{\gcd (n,p-1)=1} \frac{1}{p}\sum_{0\leq s\leq p-1} e^{\frac{i 2\pi  (\tau ^n-u)s}{p} }=
		\left \{\begin{array}{ll}
			1 & \text{ if } \ord_p (u)=p-1,  \\
			0 & \text{ if } \ord_p (u)\neq p-1. \\
		\end{array} \right.
	\end{equation}
	\vskip .1 in	 
	This follows from the geometric series $\sum_{0\leq n\leq N-1} w^n =(w^N-1)(w-1),w\ne1$ applied to the inner sum.
\end{proof}

Another approach to the construction of divisorfree characteristic function is via the discrete logarithm $\log_{\tau}:\F_p^{\times}\longrightarrow \F_p$ in finite field $\F_p$.

\begin{lem} \label{lem9955FF.200DFL} \hypertarget{lem9955FF.200DFL}  Let \(p\geq 2\) be a prime and let \(\tau\) be a primitive root mod \(p\) and  let \(\psi \neq 1\) be a nonprincipal additive character of order $\ord  \psi =p$. If \(u\in
	\mathbb{F}_p\) is a nonzero element, then
	\begin{equation}
		\Psi (u)=\sum _{\substack{1\leq s< p-1\\\gcd (s,p-1)=1}} \frac{1}{p}\sum _{0\leq t< p} \psi \left ((s-\log_{\tau}u)t\right)
		=\left \{
		\begin{array}{ll}
			1 & \text{   \normalfont if } \ord_p (u)=p-1,  \\
			0 & \text{   \normalfont if } \ord_p (u)\neq p-1. \\
		\end{array} \right .\nonumber
	\end{equation}
\end{lem}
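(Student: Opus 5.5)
The plan is to follow the template of the proof of Lemma \ref{lem9955.200A}, replacing the equation $\tau^n-u=0$ in $\mathbb{F}_p$ by the linear equation $s-\log_\tau u=0$ for the exponent. Normalize the discrete logarithm so that $\log_\tau u$ is the unique integer $m\in\{0,1,\ldots,p-2\}$ with $\tau^m\equiv u \bmod p$. This is well defined because $\tau$ generates the cyclic group $\mathbb{F}_p^{\times}$ of order $p-1$ and $u\neq 0$.

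\textbf{Step 1 (orthogonality).} Write $\psi(z)=e^{i2\pi kz/p}$ with $p\nmid k$. For any integer $a$, the geometric series gives
\begin{equation*}
\frac{1}{p}\sum_{0\leq t<p}\psi(at)=\begin{cases}1 & \text{if } a\equiv 0 \bmod p,\\ 0 & \text{otherwise.}\end{cases}
\end{equation*}
Applying this with $a=s-m$, the double sum collapses to the number of indices $s$ with $1\leq s<p-1$, $\gcd(s,p-1)=1$, and $s\equiv m \bmod p$.

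\textbf{Step 2 (congruence versus equality).} This is the one point needing care, because the exponent naturally lives modulo $p-1$ while the additive character detects congruences modulo $p$. Both $s$ and $m$ lie in $[0,p-2]$, so $|s-m|<p$. Hence $s\equiv m \bmod p$ forces $s=m$. The count in Step 1 is therefore $1$ if $m$ itself satisfies $1\leq m<p-1$ and $\gcd(m,p-1)=1$, and $0$ otherwise.

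\textbf{Step 3 (identifying primitive roots).} Use the standard fact that in a cyclic group of order $p-1$,
\begin{equation*}
\ord_p(\tau^m)=\frac{p-1}{\gcd(m,p-1)}.
\end{equation*}
Thus $u=\tau^m$ has order $p-1$ exactly when $\gcd(m,p-1)=1$. For $p>2$ the value $m=0$ has $\gcd(0,p-1)=p-1>1$, so the condition $s\geq 1$ excludes nothing relevant, and the sum equals $\Psi(u)$ as claimed.

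The main obstacle, mild as it is, is the mismatch in Step 2 between working modulo $p$ and modulo $p-1$, which is resolved by the choice of representatives. Separately, the degenerate prime $p=2$ should be checked or excluded by convention: there $u=1$ is a primitive root but the range of $s$ is empty, so the formula presumably intends $p\geq 3$.
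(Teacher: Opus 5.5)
Your proof is correct and follows the paper's route: orthogonality of the additive character collapses the inner sum, and the equation $s=\log_\tau u$ has a solution with $\gcd(s,p-1)=1$ exactly when $u$ is a primitive root. Two points the paper leaves implicit are made precise in your version: the normalization $\log_\tau u\in\{0,\ldots,p-2\}$, which is what makes a congruence modulo $p$ force equality, and your caveat about the degenerate case $p=2$.
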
	

\begin{proof}[\textbf{Proof}] Let $\mathscr{R}=\{s<p:\gcd(s,p-1)=1\}$ and set the additive character $\psi(s) =e^{i 2\pi  as/p}\in \C$. Consider a fixed element $u=\tau^s\in \F_p$ and its discrete logarithm $\log_{\tau}u=s$. As the index varies over the set $s\in \mathscr{R}$ ranges over the integers relatively prime to $\varphi(p)=p-1$, the equation 
	\begin{equation}\label{eq9977FF.300DFL1}
		a=s- \log_{\tau}u=0
	\end{equation} 
	has a unique solution $s=\log_{\tau}u$ if and only if the fixed element $u\in \F_p$ is a primitive root. This implies that the inner sum in 	
	\begin{equation}\label{eq9977FF.300DFL2}
		\sum _{\substack{1\leq s< p-1\\\gcd (s,p-1)=1}} \frac{1}{p}\sum _{0\leq t< p} e^{i 2\pi \frac{(s-\log_{\tau}u)t}{p}}=
		\left \{\begin{array}{ll}
			1 & \text{   \normalfont if } \ord_{p} (u)=p-1,  \\
			0 & \text{   \normalfont if } \ord_{p} (u)\ne p-1. \\
		\end{array} \right.
	\end{equation} 
	collapses to $\sum _{0\leq s< p} e^{i 2\pi as/p}=\sum _{0\leq s< p-1} 1=p $. Otherwise, if the element $u-1\in \F_p$ is not a primitive root, then the equation \eqref{eq9977FF.300DFL1} has no solution $s\in \mathscr{R}$ , and the inner sum in \eqref{eq9977FF.300DFL2} collapses to $\sum _{0\leq s< p-1} e^{i 2\pi as/p}=0$,
	this follows from the geometric series formula $\sum_{0\leq n\leq  N-1} w^n =(w^N-1)/(w-1)$, where $w=e^{i 2\pi a/p}\ne1$ and $N=p$. 
	This completes the verification.	 
\end{proof}
\section{Results on Exponential Sums} \label{S9933ERP}\hypertarget{S9933ERP}
The exponential sums of interest in this analysis are presented in this section. 
\begin{lem}   \label{lem9933ERP.120}\hypertarget{lem9933ERP.120}  Let \(p\geq 2\) be a large prime and let $x\leq p$. If $\tau $ is a primitive root modulo $p$ and $a \in [1, p-1]$, then
	\begin{equation}
		\sum_{\substack{1\leq n\leq x\\\gcd(n,p-1)=1}} e^{\frac{i2\pi a \tau^n}{p}} \ll  p^{1/2+\delta}(\log p)^2 ,
	\end{equation} 
	where $\delta>0 $ is a small real number. 
\end{lem}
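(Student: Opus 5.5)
The plan is to remove the coprimality condition with M\"obius inversion, and then bound each of the resulting sums over $n$ in an arithmetic progression by completing them to full periods of the cyclic group generated by $\tau$.

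First, write $\sum_{d\mid\gcd(n,p-1)}\mu(d)$ for the indicator of $\gcd(n,p-1)=1$. This gives
\begin{equation}
\sum_{\substack{1\leq n\leq x\\ \gcd(n,p-1)=1}} e^{\frac{i2\pi a\tau^n}{p}}=\sum_{d\mid p-1}\mu(d)\sum_{1\leq m\leq x/d} e^{\frac{i2\pi a(\tau^{d})^m}{p}} .
\end{equation}
Each inner sum is an incomplete sum over the geometric progression $\tau^{dm}$. Here $\tau^d$ has order $(p-1)/d$, and $x/d\leq p/d$, so the range covers at most about one period plus a bit.

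Next, I would bound an incomplete sum $S_d(y)=\sum_{1\le m\le y}\psi(a\lambda^m)$, where $\lambda=\tau^d$ has order $t=(p-1)/d$, by completing it. Introduce additive characters modulo $t$:
\begin{equation}
S_d(y)=\frac{1}{t}\sum_{0\le b<t}\Bigl(\sum_{1\le m\le y} e^{-\frac{i2\pi bm}{t}}\Bigr)\Bigl(\sum_{0\le k<t}\psi(a\lambda^k)e^{\frac{i2\pi bk}{t}}\Bigr).
\end{equation}
The first factor is a geometric series, bounded by $\min(y,\|b/t\|^{-1})$; summed over $b$ this contributes $O(t\log t)$. The complete sum $\sum_{k<t}\psi(a\lambda^k)e^{i2\pi bk/t}$ is a sum over the subgroup $H$ of index $d$ twisted by a multiplicative character of $H$. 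Extending that character to $\F_p^\times$ and detecting $H$ by the $d$ characters of order dividing $d$ writes it as an average of Gauss sums. Hence it is bounded by $\sqrt p$, using the Weil/Gauss-sum bound. This yields $S_d(y)\ll \sqrt p\,\log p$ uniformly in $d$.

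Finally, summing over squarefree $d\mid p-1$ gives a factor $2^{\omega(p-1)}\ll p^{\delta}$ for any fixed $\delta>0$. The total is $\ll p^{1/2+\delta}\log p$, which is at most the stated $p^{1/2+\delta}(\log p)^2$; the extra logarithm absorbs the harmonic-sum constants.

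The main obstacle is the completion step. We need uniformity in $d$, and in particular the subgroup sums must be controlled when $t=(p-1)/d$ is small. There the bound $\sqrt p$ is still valid through the Gauss-sum decomposition, but it is trivially worse than $t$. I would simply take the minimum of $t$ and $\sqrt p$, and check that the $\log t$ factor from the geometric-series sum does not exceed $\log p$. Everything else is routine bookkeeping.
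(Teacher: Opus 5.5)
The paper gives no argument for this lemma; its proof is a single sentence deferring to \cite[Theorem 4.3]{CN2017}. So there is no in-paper route to compare with. Your proposal is a correct, self-contained proof along classical lines: M\"obius inversion, completion modulo the order $t=(p-1)/d$ of $\tau^d$, Gauss sums over the subgroup of index $d$, and the divisor bound.

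The individual steps hold up:
\begin{itemize}
\item The M\"obius step is exact.
\item The completion identity is valid even though $y=x/d$ may slightly exceed $t$; you only need $y\le t+1$.
\item Writing the twisted subgroup sum as $\frac1d\sum_{\rho^d=1}\sum_{z\in\F_p^\times}\psi(az)\chi\rho(z)$ gives $|G(b)|\le\sqrt p$ uniformly in $b$. The terms with $\chi\rho$ trivial contribute only $1$ each.
\end{itemize}

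There are two small bookkeeping points.
\begin{itemize}
\item The $b=0$ term contributes $y$, so the geometric-series factors sum to $O(y+t\log 2t)$ rather than $O(t\log t)$. This only matters when $t=1$.
\item The resulting estimate $|S_d(y)|\le \sqrt p\,\bigl(y/t+O(\log 2t)\bigr)\ll\sqrt p\log p$ is already uniform in $d$. So the small-$t$ case you flag as the main obstacle needs no separate treatment and no minimum with $t$.
\end{itemize}

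Compared with the bare citation, your route makes the dependence on $\delta$ explicit. The only place $\delta$ enters is $2^{\omega(p-1)}\ll_\delta p^{\delta}$, so the implied constant depends on $\delta$. Your argument also gives the slightly stronger bound $p^{1/2+\delta}\log p$, one logarithm better than the statement requires.
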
 
\begin{proof}[\textbf{Proof}]The complete proof appears in {\color{red}\cite[Theorem 4.3]{CN2017}}. 
\end{proof}
Assuming $a<x=O(p^{\varepsilon})$ is not a primitive root, the exponential sum occurring in the error term has the trivial evaluation
	\begin{equation}\label{eq9955P.230a} 
\sum_{1\leq b\leq  p-1}	 e^{-i2\pi \frac{ab}{p}}	\sum_{\substack{1\leq n\leq p-1\\\gcd(n,p-1)=1}} e^{\frac{i2\pi b \tau^n}{p}} =-\varphi(p-1).
	\end{equation} 
A nontrivial upper bound is estimated in the next result.
\begin{lem}   \label{lem9933ERP.230V}\hypertarget{lem9933ERP.230V}  Let \(p\geq 2\) be a large prime. If $\tau $ be a primitive root modulo $p$ and $a<x=O(p^{\varepsilon})$ is not a primitive root, then
	\begin{equation}\label{eq9955P.230c} 
	\Bigg|\widehat{V(a)}\Bigg|=	\Bigg|\sum_{1\leq b\leq  p-1}	 e^{-i2\pi \frac{ab}{p}}	\sum_{\substack{1\leq n\leq p-1\\\gcd(n,p-1)=1}} e^{\frac{i2\pi b \tau^n}{p}}\Bigg| = O(p^{1/2+\delta} (\log p)^2)\nonumber,
	\end{equation} 
where $\delta>0$, $\varepsilon>0$ are small numbers and the implied constant is independent of $ a,b \in [1, p-1]$. 	
\end{lem}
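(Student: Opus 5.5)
The natural first step is to interchange the two finite sums and evaluate the inner sum over $b$ exactly by orthogonality of additive characters, before reaching for a bound such as Lemma \ref{lem9933ERP.120}. Write
\begin{equation*}
\widehat{V(a)}=\sum_{\substack{1\leq n\leq p-1\\ \gcd(n,p-1)=1}}\ \sum_{1\leq b\leq p-1} e^{\frac{i2\pi b(\tau^n-a)}{p}} .
\end{equation*}
For each fixed $n$, the full sum over $0\leq b\leq p-1$ equals $p$ if $\tau^n\equiv a \bmod p$ and $0$ otherwise. Removing the term $b=0$, which equals $1$, the inner sum is therefore $p\cdot[\tau^n\equiv a]-1$.

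Next I use the hypothesis on $a$. As $n$ runs over the residues coprime to $p-1$, the elements $\tau^n$ run over exactly the primitive roots, as in the proof of Lemma \ref{lem9955.200A}. Since $a$ is assumed not to be a primitive root, $\tau^n\not\equiv a$ for every admissible $n$. Hence every inner sum equals $-1$, and
\begin{equation*}
\widehat{V(a)}=-\varphi(p-1),
\end{equation*}
which is precisely the ``trivial evaluation'' \eqref{eq9955P.230a} recorded just before the statement.

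This computation is where the plan stops, and I expect this step to be fatal rather than merely hard. The identity is exact, so no cancellation can improve it. By the classical lower bound $\varphi(m)\gg m/\log\log m$, we get
\begin{equation*}
\big|\widehat{V(a)}\big|=\varphi(p-1)\gg \frac{p}{\log\log p}.
\end{equation*}
For any fixed $\delta<1/2$ this exceeds $p^{1/2+\delta}(\log p)^2$ once $p$ is large. So the asserted bound $O(p^{1/2+\delta}(\log p)^2)$ cannot hold as stated, and it is inconsistent with \eqref{eq9955P.230a}.

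Bounding the inner $n$-sum by Lemma \ref{lem9933ERP.120} for each $b$ and then summing the outer $b$-sum trivially would only give $O(p^{3/2+\delta}(\log p)^2)$, which is weaker still. An argument that seems to reach $p^{1/2+\delta}$ would have to exploit cancellation in the $b$-sum that the exact evaluation shows is absent.

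My proposal is therefore to prove the exact identity $\widehat{V(a)}=-\varphi(p-1)$ in place of the stated estimate. Any later use of Lemma \ref{lem9933ERP.230V} should then be re-examined: the main term must absorb this quantity, since $-\varphi(p-1)/p$ is of the same order as the density $\varphi(p-1)/(p-1)$, rather than treating it as an error term of size $p^{1/2+\delta}$.
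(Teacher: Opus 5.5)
Your computation is correct, and I see no gap in it. Interchanging the sums and using orthogonality gives $\widehat{V(a)}=-\varphi(p-1)$ exactly. Hence $|\widehat{V(a)}|\gg p/\log\log p$, and the asserted bound fails for every fixed $\delta<1/2$ once $p$ is large. For $\delta\ge 1/2$ the bound holds trivially, since $\varphi(p-1)<p$. In that range, however, the estimate it feeds into Lemma~\ref{lem9955SD.750} exceeds the main term, so the lemma is either false or useless.

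There is no argument in the paper for you to have matched. Its proof is only a pointer to \cite[Lemma 4.1]{CN2017}. Moreover, the identity you derived is exactly the display \eqref{eq9955P.230a}, which the paper writes one line before stating the lemma. So the statement contradicts the paper's own computation, and declining to "prove" it is the right call.

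One addition would make your disproof visibly unconditional: note that the hypothesis is never vacuous. For instance, $a=1$ has order $1$ and is admissible for every odd $p$ and every $x>1$. For the range $a\ge 2$ used downstream, $a=4$ is a nonzero square and hence never a primitive root for odd $p$. So the failure occurs for every large prime, not merely for some.

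Your closing remark about later uses can also be made exact. Assume, as in Lemma~\ref{lem9955SD.300E}, that no $u\in[2,x]$ is a primitive root. Then each inner sum over $1\le s\le p-1$ equals $-1$. The one-dimensional error term is therefore exactly $-\frac{\varphi(p-1)}{p}(\lfloor x\rfloor-1)$, the negative of the $s=0$ contribution. This is forced, because the characteristic function of Lemma~\ref{lem9955.200A} is exact and the count vanishes. Thus the false bound is precisely what manufactures the contradiction in Section~\ref{S3355M-T}. Consequently Lemmas~\ref{lem9955SD.750}, \ref{lem9955SD.300E} and \ref{lem33355DD.350C}, and the proof of Theorem~\ref{thm3355DD.850A}, do not survive as written.
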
 

\begin{proof}[\textbf{Proof}]The complete proof appears in {\color{red}\cite[Lemma 4.1]{CN2017}}. 
\end{proof}

The analysis of this class of exponential sums has a growing literature, see \cite{BS2007}, \cite{GD2012}, \cite{CC2008}, et alii.
\section{Fibers and Multiplicities Result} 
The multiplicities of certain values occurring in the estimate of the error term $E(z)$ are computed in this section. 
\begin{lemma}  \label{lem9955P.300S}\hypertarget{lem9955P.300S} Let $p$ be a prime, let $ z<p$ and let $\tau\in \F_p$ be a primitive root in the finite field $\F_p$.  Define the maps
	\begin{equation}\label{eq9955P.300-m}
		\alpha(n,u)\equiv (\tau ^n-u)\bmod p\quad \text{ and } \quad 
		\beta(a,b)\equiv ab\bmod p.
	\end{equation}	
	Then, the fibers $\alpha^{-1}(m)$ and $\beta^{-1}(m)$ of an element $0\ne m\in \F_p$
	have the cardinalities 
	\begin{equation}\label{eq9955P.300-f}
		\#	\alpha^{-1}(m)\leq z-1\quad \text{ and }\quad \#\beta^{-1}(m)=	z
	\end{equation}
	respectively.
\end{lemma}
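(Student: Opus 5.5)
The plan is to compute both fibers directly by elementary counting, once the domains of $\alpha$ and $\beta$ are fixed. The statement does not specify these domains, so I will fix the ones that the error term $E(z)$ appears to require; this choice is an assumption on my part, and the bound $z-1$ depends on it. For $\alpha$, take $1\leq n\leq p-1$ with $\gcd(n,p-1)=1$, and let $u$ range over the integers $2\leq u\leq z$, matching the restriction $u\geq 2$ in Theorem \ref{thm3355DD.850A}. For $\beta$, take $1\leq a\leq z$ and $1\leq b\leq p-1$. Since $z<p$, every $u$ and every $a$ in these ranges is a nonzero residue mod $p$, and the residues in each range are distinct.

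For $\beta$, fix $0\neq m\in\F_p$ and count pairs $(a,b)$ with $ab\equiv m \bmod p$. Each $a\in[1,z]$ is invertible in $\F_p$, so there is exactly one residue $b\equiv m a^{-1}\bmod p$. This $b$ is nonzero because $m\neq 0$, so it has a unique representative in $[1,p-1]$. Hence every $a$ contributes exactly one pair, and $\#\beta^{-1}(m)=z$.

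For $\alpha$, fix $m\neq 0$ and a value of $u$. The condition $\tau^n\equiv m+u \bmod p$ has a solution only if $m+u\not\equiv 0$. When it does, the discrete logarithm gives exactly one $n\in[1,p-1]$ with $\tau^n\equiv m+u$, because $\tau$ is a primitive root. That $n$ is admissible only if $\gcd(n,p-1)=1$, that is, only if $m+u$ is itself a primitive root. So each $u$ contributes at most one pair $(n,u)$, and $\#\alpha^{-1}(m)$ is at most the number of $u$-values, namely $z-1$ for the range $2\leq u\leq z$. It equals the number of $u$ in that range for which $m+u$ is a primitive root, which is why only an upper bound is claimed.

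The main obstacle is not the counting but the fact that the lemma is stated without domains. If instead $u$ ran over all of $[1,z]$, the injectivity argument above would give only $\#\alpha^{-1}(m)\leq z$. To reach $z-1$ one would then need an extra observation: for instance, that $u=1$ is excluded since $1$ is not a primitive root, or that for each $m$ some $u$ fails (e.g.\ one with $m+u\equiv 0$, or one with $m+u$ a nonresidue-free non-generator). Such an observation does not hold uniformly in $m$. So I would state the domain $2\leq u\leq z$ explicitly at the start of the proof, and note that for the later estimate of $E(z)$ only the order of magnitude $O(z)$ of both fibers matters.
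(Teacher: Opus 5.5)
Your proposal is correct and is essentially the paper's argument. The paper fixes the same domains you chose: $\alpha$ on $[2,z]\times\mathscr{R}$ with $\mathscr{R}=\{n<p:\gcd(n,p-1)=1\}$, and $\beta$ on $[1,z]\times[1,p-1]$. It then argues, as you do, that for each fixed $u$ the map $n\mapsto\tau^n-u$ is injective, giving at most one preimage per $u$ and hence at most $z-1$; and that for each fixed $a$ the map $b\mapsto ab$ permutes the nonzero residues, giving exactly one preimage per $a$ and hence $z$.
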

\begin{proof}Let $\mathscr{R}=\{n<p:\gcd(n,p-1)=1\}$. Given a fixed $u\in [2,z]$, the map 
	\begin{equation}\label{eq9955P.300-m1}
		\alpha:[2,z]\times \mathscr{R}\longrightarrow\F_p\quad  \text{ defined by }\quad  \alpha(n,u)\equiv (\tau ^n-u)\bmod p,
	\end{equation}
	is one-to-one. Here, the map $n\longrightarrow\tau^n \bmod p$ is a permutation the nonzero elements of the finite field $\F_p$, and the map $n\longrightarrow(\tau ^n-u)\bmod p$ is a shifted permutation, see {\color{red}\cite[Chapter 7]{LN1997}} for details. Thus, as $u\in [2,z]$ varies, each value $m=\alpha(n,u)$ is repeated up to $z-1$ times. Moreover, the premises no primitive root $u\leq z$ implies that $m=\alpha(n,u)\ne0$. This verifies that the fiber
	\begin{align}\label{eq9955P.300-f1}
		\#	\alpha^{-1}(m)&=	\#\{(n,u):m\equiv (\tau ^n-u)\bmod p:2\leq u\leq z \text{ and }\gcd(n,p-1)=1\}\nonumber\\[.3cm]
		&\leq z-1.
	\end{align}		
	Similarly, given a fixed $a\in [1,z]$, the map 
	\begin{equation}\label{eq9955P.300-m2}
		\beta:[1,z]\times [1,p-1]\longrightarrow\F_p\quad  \text{ defined by }\quad  \beta(a,b)\equiv ab\bmod p,
	\end{equation}
	is one-to-one. Here the map $b\longrightarrow ab \bmod p$ permutes the elements of the finite field $\F_p$. Thus, as $a\in [1,z]$ varies, each value $m=\beta(a,b)$ is repeated exactly $z$ times. This verifies that the fiber
	\begin{align}\label{eq9955P.300-f2}
		\#	\beta^{-1}(m)&=	\#\{(a,b):m\equiv ab\bmod p:1\leq a\leq z \text{ and }1\leq b< p\}\nonumber\\[.3cm]
		&=z
	\end{align}
	
	Now each value $m=\alpha(n,u)\ne0$ (of multiplicity up to $(z-1)$ in $	\alpha^{-1}(m)$), is matched to $m=\alpha(n,u)=\beta(a,b)$ for some $(a,b)$, (of multiplicity exactly $z$ in $	\beta^{-1}(m)$). These data prove that $\# \alpha^{-1}(m)\leq\# \beta^{-1}(m)$. Notice that  the inequality follows because the map \eqref{eq9955P.300-m1} is not surjective. For example $m=0\in \beta^{-1}(m)$ has multiplicity $z>0$, but $m=0\not\in \alpha^{-1}(m)$ has multiplicity $0$.
\end{proof}

\section{Evaluation of the Main Term} \label{S3355T}\hypertarget{S3355T}
The main term $M(x)=M(x,S_{00})$ occurring in \eqref{eq3355.400m} in \hyperlink{S3355M-T}{Section} \ref{S3355M-T} is evaluated in this Section.
\begin{lemma} \label{lem3355.300T}\hypertarget{lem3355.300T}  If $p>1$ is a large prime number and $x<p$ is a real number, then 
	\begin{equation}
	\sum _{2 \leq u\leq x} \frac{1}{p^2}\sum_{\substack{1\leq n_1\leq p-1\\\gcd(n_1,p-1)=1,}} \sum_{\substack{1\leq n_2\leq p-1\\\gcd(n_2,p-1)=1}}  1
		= \left (\frac{\varphi(p-1)}{p} \right )^2 \cdot  x+O(1) \nonumber,
	\end{equation}
	where $\varphi(n)$ is the Euler totient function.
\end{lemma}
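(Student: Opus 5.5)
The sum factors completely, because the summand is the constant $1$. I will evaluate it as a product of three independent counts and then compare the result with $x(\varphi(p-1)/p)^2$.

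First I evaluate the two inner sums. For each fixed $u$, the double sum over $n_1,n_2$ is the product of two identical one-variable counts. Each count is
\[
\#\{1\le n\le p-1:\gcd(n,p-1)=1\}=\varphi(p-1),
\]
by the definition of the Euler totient function. The $n=p-1$ endpoint does not contribute, since $\gcd(p-1,p-1)=p-1>1$ once $p>2$. So the inner double sum equals $\varphi(p-1)^2$, independently of $u$.

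Next I sum over $u$. The outer sum has
\[
\#\{2\le u\le x\}=\lfloor x\rfloor-1=x+O(1)
\]
terms. Hence the whole left side is exactly
\[
\frac{\varphi(p-1)^2}{p^2}\bigl(\lfloor x\rfloor-1\bigr)=\left(\frac{\varphi(p-1)}{p}\right)^2 x+\left(\frac{\varphi(p-1)}{p}\right)^2\bigl(\lfloor x\rfloor-1-x\bigr).
\]

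It remains to bound the second term on the right. We have $|\lfloor x\rfloor-1-x|\le 2$. Also $\varphi(p-1)\le p-1<p$, so $(\varphi(p-1)/p)^2<1$. The error term is therefore at most $2$ in absolute value, which is $O(1)$ with an absolute implied constant. The hypothesis $x<p$ is not needed for this computation; it only keeps the statement consistent with its later use in Section \ref{S3355M-T}.

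There is no genuine obstacle. The only points needing care are the endpoint $n=p-1$ in the count of residues coprime to $p-1$, and the remark that the factor $(\varphi(p-1)/p)^2\le 1$ keeps the rounding error bounded.
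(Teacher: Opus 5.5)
Your proposal is correct and follows essentially the same route as the paper. Because the summand is constant, each inner sum equals $\varphi(p-1)$ and the outer sum has $\lfloor x\rfloor-1=x+O(1)$ terms, which gives $(\varphi(p-1)/p)^2x+O(1)$. Your explicit bound on the rounding error via $(\varphi(p-1)/p)^2<1$ spells out what the paper leaves implicit; note that both arguments tacitly assume $x\ge 1$, which is where the count of $u$ equals $\lfloor x\rfloor-1$.
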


\begin{proof}[\textbf{Proof}] The number of relatively prime integers $n<p$ coincides with the values of the totient function. A routine rearrangement gives 
	\begin{eqnarray}\label{eq3355.300f}
		\sum _{2 \leq u\leq x} \frac{1}{p^2}\sum_{\substack{1\leq n_1\leq p-1\\\gcd(n_1,p-1)=1,}} \sum_{\substack{1\leq n_2\leq p-1\\\gcd(n_2,p-1)=1}}  1 
		&=&\sum _{2 \leq u\leq x} \frac{\varphi(p-1)^2}{p^2}\\[.3cm]
		&=& \left (\frac{\varphi(p-1)}{p} \right )^2 \cdot  x+O(1)\nonumber.
	\end{eqnarray} 
\end{proof}

\section{Error Terms for Dimension $k=1$} \label{S9955LAP-E}\hypertarget{S9955LAP-E}
A nontrivial upper bound of the error term in dimension $k=1$ is computed in this section. To achieve that the error term is partitioned as $E(x)=E_{0}(x)+E_{1}(x)$. The upper bound of the first term $E_0(x)$ for $n\leq p/x$ is derived using geometric summation/sine approximation techniques, and the upper bound of the second term $E_1(x)$ for $p/x\leq n\leq p$ is derived using exponential sums techniques.
\begin{lem}  \label{lem9955SD.300E}\hypertarget{lem9955SD.300E}  Let $p>1$ be a large prime number and let $x=O(p^{\varepsilon})$ be a real number and $\varepsilon>0$ is a small number. If there is no primitive root $u\leq x$ then 
	\begin{equation}\label{eq9955SD.300b}
		\sum_{2 \leq u\leq x}
		\frac{1}{p}\sum_{\gcd(n,p-1)=1,} \sum_{ 0<s\leq p-1} \psi \left((\tau ^n-u)s\right) \ll (\log p)(\log x)\nonumber, 
	\end{equation} 
	where $\psi(s)=e^{i 2 \pi ks/p}$ with $0< k<p$, is an additive character.
\end{lem}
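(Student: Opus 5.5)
The natural first step is to evaluate the triple sum exactly rather than estimate it. For each fixed $u$ and each fixed $n$ with $\gcd(n,p-1)=1$, the inner sum over $s$ is a complete character sum with one term removed:
\begin{equation*}
\sum_{0<s\leq p-1}\psi\left((\tau^n-u)s\right)=\sum_{0\leq s\leq p-1}\psi\left((\tau^n-u)s\right)-1 .
\end{equation*}
The complete sum equals $p$ when $\tau^n\equiv u \bmod p$ and $0$ otherwise. This is exactly the computation in Lemma \ref{lem9955.200A}.

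Under the hypothesis that no $u\leq x$ is a primitive root, the congruence $\tau^n\equiv u$ has no solution with $\gcd(n,p-1)=1$. Hence every inner sum equals $-1$. Summing over the $\varphi(p-1)$ admissible $n$ and the $\lfloor x\rfloor-1$ values of $u$ gives
\begin{equation*}
\sum_{2\leq u\leq x}\frac{1}{p}\sum_{\gcd(n,p-1)=1}\sum_{0<s\leq p-1}\psi\left((\tau^n-u)s\right)=-\frac{\varphi(p-1)}{p}\left(\lfloor x\rfloor-1\right).
\end{equation*}
This matches the trivial evaluation \eqref{eq9955P.230a}, summed over $a=u$. Its absolute value is $\ll x$, which is the honest upper bound this route produces. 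The intended route in the paper is different: split $n\leq p/x$ and $n>p/x$, then bound $E_0$ by geometric sums and $E_1$ by Lemma \ref{lem9933ERP.230V} together with the fiber counts of Lemma \ref{lem9955P.300S}. But any such splitting must reproduce this same closed form, because the quantity is an exact finite sum.

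This is the main obstacle, and I expect it to be fatal to the lemma as stated. The exact value has size
\begin{equation*}
\frac{\varphi(p-1)}{p}\,x\gg\frac{x}{\log\log p},
\end{equation*}
and this exceeds $(\log p)(\log x)$ as soon as $x$ is larger than about $(\log p)(\log\log p)^2$. In particular it exceeds it throughout the range $x=(\log p)^2(\log\log p)^5$ used in Theorem \ref{thm3355DD.850A}, and for $x=p^{\varepsilon}$. So the claimed bound $\ll(\log p)(\log x)$ cannot hold in the stated generality.

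The one check I would try first is to reinterpret the claim as a bound after cancellation against a main term. If the $s=0$ contributions, $\frac{\varphi(p-1)}{p}(\lfloor x\rfloor-1)$, are added back, the total is exactly $0$ under the hypothesis, since no $u\leq x$ is a primitive root. That is a true statement, but it is merely a restatement of the hypothesis. The contradiction the paper is aiming for would then have to come from comparing this total with the main term of Lemma \ref{lem3355.300T}, not from a nontrivial bound on the $s\neq0$ part.

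So my plan is to prove the exact identity above. I would state the correct bound $\ll x$ in place of the claimed one, and I would not try to prove $(\log p)(\log x)$, because the identity shows it is false in general.
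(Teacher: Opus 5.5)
Your closed-form evaluation is correct, and it is the right way to look at this lemma. Under the hypothesis, every inner sum over $s$ equals $-1$, so the left side is exactly $-\frac{\varphi(p-1)}{p}(\lfloor x\rfloor-1)$. You are also right that the paper's route cannot beat this, and your identity shows where that route breaks.

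The paper splits $E=E_0+E_1$ at $n=p/x$. Your computation gives $|E_0(x)|=\frac{\lfloor x\rfloor-1}{p}\,\#\{n<p/x:\gcd(n,p-1)=1\}\le 1$, so essentially the whole value sits in $E_1(x)$. The paper bounds $E_1$ via Lemma \ref{lem9933ERP.230V}, which asserts $|\widehat{V(a)}|\ll p^{1/2+\delta}(\log p)^2$ for non-primitive roots $a$. That contradicts the paper's own exact evaluation \eqref{eq9955P.230a}, $\widehat{V(a)}=-\varphi(p-1)$. The paper also silently drops the subtracted sum over $n\le p/x$. Concretely, take $2\le x<3$ and $p\equiv\pm1\bmod 8$, so $2$ is a square and not a primitive root. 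Then $E_1(x)=-\varphi(p-1)/(2p)$, which contradicts Lemma \ref{lem9955SD.750}. The $E_0$ step, replacing the residues $\tau^n-u$ by products $ab<p$ via Lemma \ref{lem9955P.300S}, is also unjustified, though the claimed $E_0$ bound happens to be true.

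The gap is in the conclusion you draw. The lemma is an implication, and your identity evaluates the sum only when the hypothesis holds. A large closed form refutes the lemma only if there really are large primes with no primitive root up to such an $x$, and you never address this.

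Let $g(p)$ be the least primitive root. Apply the lemma with $x$ just below $g(p)$, or with $x=p^{\varepsilon}$ if $g(p)>p^{\varepsilon}$. Your identity then shows the lemma is \emph{equivalent} to $\frac{\varphi(p-1)}{p}\,g(p)\ll(\log p)\log g(p)$. Combined with $\varphi(p-1)/p\gg 1/\log\log p$, this gives $g(p)\ll(\log p)(\log\log p)^2$.

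That bound is an open conjecture of Bach type. It is far beyond Burgess's $g(p)\ll p^{1/4+\epsilon}$ and beyond GRH-conditional bounds. Still, it is consistent with everything known. For example, $g(p)\gg\log p\log\log p$ infinitely often on GRH (via the least quadratic nonresidue) does not violate it. The random model behind Bach's conjecture predicts exactly this bound.

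Notice that your threshold $x\approx(\log p)(\log\log p)^2$ is precisely the conjectured maximal size of $g(p)$. The closed form becomes large exactly where the hypothesis is expected to stop being satisfiable. In the ranges you cite, $x=(\log p)^2(\log\log p)^5$ and $x=p^{\varepsilon}$, the lemma is therefore presumably vacuously true rather than false.

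The correct verdict is that the statement cannot come from any estimate of this exponential sum, because a proof would settle that conjecture. Your identity does not disprove it. Replace ``false'' by ``equivalent to an open upper bound for the least primitive root''.
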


\begin{proof}[\textbf{Proof}] The product of a point $(a,b)\in [1,x]\times [1,p/x)$ satisfies $ab<p$. This leads to the partition $[1,p/x)\cup[p/x,p)=[1,p-1)$, which is suitable for the sine approximation $ab/p\ll\sin(\pi ab/p)\ll ab/p$ for $|ab/p|<1$ on the first subinterval $[1,p/x)$. Thus, consider the partition of the triple finite sum
	\begin{eqnarray} \label{eq9955P.300k}
		E(x)&=& \sum_{2 \leq u\leq x}
		\frac{1}{p}\sum_{\substack{n\leq p-1\\\gcd(n,p-1)=1}} \sum_{ 1\leq s\leq p-1} e^{i2\pi \frac{(\tau ^n-u)s}{p}}   \\[.3cm]
		&= & \sum_{2 \leq u\leq x}
		\frac{1}{p}\sum_{\substack{n< p/x\\\gcd(n,p-1)=1}} \sum_{ 1\leq s\leq p-1} e^{i2\pi \frac{(\tau ^n-u)s}{p}} + \sum_{2 \leq  u\leq x}
		\frac{1}{p}\sum_{\substack{p/x\leq n\leq p-1\\\gcd(n,p-1)=1}} \sum_{ 1\leq s\leq p-1} e^{i2\pi \frac{(\tau ^n-u)s}{p}} \nonumber\\[.3cm]
		&=&E_{0}(x)\;+\;E_{1}(x) \nonumber.
	\end{eqnarray} 
	The first suberror term $E_0(x)$ is estimated in \hyperlink{lem9955SD.700}{Lemma} \ref{lem9955SD.700} and the second suberror term $E_1(x)$ is estimated in \hyperlink{lem9955SD.750}{Lemma} \ref{lem9955SD.750}.  Summing these estimates yields
	\begin{eqnarray} \label{eq9955SD.300u4}
		E(x)&=& E_{0}(x)\;+\;E_{1}(x)   \\
		&\ll& (\log x)(\log p)\;+\;\frac{(\log p)^2}{p^{1/2-\delta}}\cdot x\nonumber\\[.12cm]
		&\ll& (\log x)(\log p)\nonumber,
	\end{eqnarray}
since $x=O(p^{\varepsilon})$, where $\varepsilon>0$ and $\delta>0$ are small real numbers. This completes the estimate of the error term.
\end{proof}

\begin{lem}   \label{lem9955SD.700}\hypertarget{lem9955SD.700}  Let $p>1$ be a large prime number and let $x=O(p^{\varepsilon})$ be a real number and $\varepsilon>0$ is a small number. If there is no primitive root $u\leq x$ then 
	\begin{equation} 
		E_0(x)= \sum_{2 \leq u\leq x}
		\frac{1}{p}\sum_{\substack{1\leq n< p/x\\\gcd(n,p-1)=1}} \sum_{ 1\leq s\leq p-1} e^{i2\pi \frac{(\tau ^n-u)s}{p}}=O\left( (\log x)(\log p)\right) .
	\end{equation} 
\end{lem}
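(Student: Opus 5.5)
The plan is to evaluate the inner sum over $s$ exactly rather than estimate it, so that the bound follows from counting. The first step is to fix $u\in[2,x]$ and $n<p/x$ with $\gcd(n,p-1)=1$, and set $m\equiv \tau^n-u \bmod p$. As in the proof of \hyperlink{lem9955.200A}{Lemma} \ref{lem9955.200A}, $\tau^n$ is a primitive root whenever $\gcd(n,p-1)=1$. The hypothesis that there is no primitive root $u\leq x$ therefore forces $m\not\equiv 0\bmod p$. This is the same observation used for the fiber $\alpha^{-1}(m)$ in \hyperlink{lem9955P.300S}{Lemma} \ref{lem9955P.300S}.

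The second step applies the geometric series with $w=e^{i2\pi m/p}\neq 1$. Since $\sum_{0\leq s\leq p-1}w^s=0$, removing the $s=0$ term gives
\begin{equation}
\sum_{1\leq s\leq p-1} e^{i2\pi \frac{(\tau^n-u)s}{p}}=-1 . \nonumber
\end{equation}
Hence every inner sum equals $-1$, independently of $u$ and $n$. This gives the closed form
\begin{equation}
E_0(x)=-\frac{1}{p}\sum_{2\leq u\leq x}\;\sum_{\substack{1\leq n<p/x\\ \gcd(n,p-1)=1}}1
=-\frac{\lfloor x\rfloor-1}{p}\,\#\{n<p/x:\gcd(n,p-1)=1\}. \nonumber
\end{equation}

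The third step bounds the count trivially by $p/x$, which gives $|E_0(x)|\leq (x-1)(p/x)/p<1$. This is $O(1)$, and hence certainly $O((\log x)(\log p))$ as claimed. No sine approximation and no input from \hyperlink{lem9933ERP.230V}{Lemma} \ref{lem9933ERP.230V} is needed on this range. The condition $x=O(p^{\varepsilon})$ is used only to guarantee $p/x\geq 1$, so that the range $n<p/x$ is nonempty and sensible.

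The only real obstacle is justifying $m\neq 0$ for every pair $(n,u)$ in the range. That is exactly where the hypothesis ``no primitive root $u\leq x$'' enters. Without it, a pair with $\tau^n=u$ would contribute $(p-1)/p$ instead of $-1/p$, and the evaluation above would change. Once that case is excluded, the rest is bookkeeping.

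A sharper count is also available if one wants more than boundedness. Using the density $\varphi(p-1)/(p-1)$ of integers coprime to $p-1$, the count is $\frac{\varphi(p-1)}{p-1}\cdot\frac{p}{x}+O(2^{\omega(p-1)})$, and this would give $E_0(x)=-\frac{\varphi(p-1)}{p-1}+o(1)$. The statement of the lemma does not require this refinement.
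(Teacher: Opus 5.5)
Your proof is correct, and it takes a genuinely different and much more direct route than the paper's.

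The paper splits the $s$-range at $p/2$ and bounds each incomplete geometric sum by $2/|\sin(\pi(\tau^n-u)/p)|$. It then invokes the fiber counts of Lemma~\ref{lem9955P.300S} to pass to $\frac{2}{p}\sum_{a\le x}\sum_{b<p/x}1/|\sin(\pi ab/p)|$, and finishes with $|\sin(\pi ab/p)|\gg ab/p$.

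You instead keep the $s$-sum complete, so it equals exactly $-1$; this is the same cancellation the paper records in \eqref{eq9955P.230a}. Then $E_0(x)$ is $-1/p$ times a count of pairs $(u,n)$, which gives $|E_0(x)|<1$.

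Besides being shorter and sharper, your route avoids the weakest step of the paper's proof. Lemma~\ref{lem9955P.300S} compares fibers with $b$ ranging over all of $[1,p-1]$, where $b\mapsto ab \bmod p$ permutes the nonzero residues. So the comparison it actually licenses is with $\sum_{a\le x}\sum_{b<p}1/|\sin(\pi ab/p)|\asymp xp\log p$, which gives only $O(x\log p)$. The restriction to $b<p/x$, which is what makes $ab<p$ and the sine approximation available, does not follow from that lemma. Splitting at $p/2$ is exactly what discards the cancellation you exploit, so for a complete $s$-range the sine machinery buys nothing.

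Your evaluation also clarifies the surrounding argument, since it applies verbatim on the complementary range:
\[
E_1(x)=-\frac{\lfloor x\rfloor-1}{p}\,\#\{p/x\le n\le p-1:\gcd(n,p-1)=1\}.
\]
So under the no-primitive-root hypothesis, $E_0(x)+E_1(x)=-(\lfloor x\rfloor-1)\varphi(p-1)/p$, which exactly cancels the $s=0$ term. The bound for $E_0(x)$ is therefore the trivial part. Proving under that hypothesis that $E_1(x)$ is small compared with $x\varphi(p-1)/p$ is the same as proving the hypothesis false.

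A minor point: your closing refinement $E_0(x)=-\varphi(p-1)/(p-1)+o(1)$ also needs $x\to\infty$, because of the factor $(\lfloor x\rfloor-1)/x$.
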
 
\begin{proof}[\textbf{Proof}] To apply the geometric summation/sine approximation techniques, the subsum $E_0(x)$ is partition as follows.
	\begin{eqnarray} \label{eq9955SD.700l}
		E_0(x)&=& \sum_{2 \leq u\leq x}
		\frac{1}{p}\sum_{\substack{1\leq < p/x\\\gcd(n,p-1)=1}} \sum_{ 1\leq s\leq p-1} e^{i2\pi \frac{(\tau ^n-u)s}{p}}   \\[.3cm]
		&= & \sum_{2\leq u\leq x}
		\frac{1}{p}\sum_{\substack{1\leq n< p/x\\\gcd(n,p-1)=1}} \left( \sum_{ 1\leq s\leq p/2} e^{i2\pi \frac{(\tau ^n-u)s}{p}}+ \sum_{ p/2<s\leq p-1} e^{i2\pi \frac{(\tau ^n-u)s}{p}}\right) \nonumber\\[.3cm]
		&=&E_{0,0}(x)\;+\;E_{0,1}(x) \nonumber.
	\end{eqnarray} 
	
Now, a geometric series summation of the inner finite sum in the first term yields
	\begin{eqnarray} \label{eq9955SD.700m}
		E_{0,0}(x)&=& \sum_{2 \leq u\leq x}
		\frac{1}{p}\sum_{\substack{1\leq n< p/x\\\gcd(n,p-1)=1}}  \sum_{ 1\leq s\leq p/2} e^{i2\pi \frac{(\tau ^n-u)s}{p}}  \\[.3cm]
		&=&   	\frac{1}{p} \sum_{2 \leq u\leq x}\sum_{\substack{1\leq n<p/x\\\gcd(n,p-1)=1}}   \frac{e^{i2\pi (\frac{\tau ^n-u}{p})(\frac{p}{2}+1)}-e^{i2\pi \frac{(\tau ^n-u)}{p}}}{1-e^{i2\pi \frac{(\tau ^n-u)}{p}}} \nonumber\\[.3cm]
		&\leq&   	\frac{1}{p} \sum_{2 \leq u\leq x}\sum_{\substack{1\leq n< p/x\\\gcd(n,p-1)=1}}   \frac{2}{|\sin\pi(\tau ^n-u)/p|} \nonumber,
	\end{eqnarray} 
	see {\color{red}\cite[Chapter 23]{DH2000}} for similar geometric series calculation and estimation. The last line in \eqref{eq9955SD.700m} follows from the hypothesis that $u$ is not a primitive root. Specifically, $0\ne\tau^n-u\in \F_p$ for any $n \in\{n<p:\gcd(n,p-1)=1\}$ and any $u\leq x=O(p^{\varepsilon})$. Utilizing \hyperlink{lem9955P.300S}{Lemma} \ref{lem9955P.300S}, the first term has the upper bound
	\begin{eqnarray} \label{eq9955SD.700u1}
		E_{0,0}(x)&=&\frac{1}{p} \sum_{2 \leq u\leq x}\sum_{\substack{1\leq n<p/x\\\gcd(n,p-1)=1}}   \frac{2}{|\sin\pi(\tau ^n-u)/p|}\\	[.3cm]
		&\ll&  	\frac{2}{p} \sum_{1\leq a\leq x}\sum_{1\leq b< p/x}   \frac{1}{|\sin\pi ab/p|}\nonumber\\	[.3cm]
		&\ll&  	\frac{2}{p} \sum_{1\leq a\leq z}\sum_{1\leq b< p}   \frac{p}{\pi ab} \nonumber\\	[.3cm]
		&\ll& 	\sum_{1\leq a\leq x}\frac{1}{a}\sum_{1\leq b< p}   \frac{1}{b} \nonumber\\	[.3cm]
		&\ll& (\log x)(\log p)\nonumber,
	\end{eqnarray}
	where $ab<p$ and $|\sin\pi ab/p|\ne0$ since $p\nmid ab$. Similarly, the second term has the upper bound
	\begin{eqnarray} \label{eq9955SD.700v}
		E_{0,1}(x)&=& \sum_{2 \leq u\leq x}
		\frac{1}{p}\sum_{\substack{1\leq n<p/x\\\gcd(n,p-1)=1}}  \sum_{ p/2<s\leq p-1} e^{i2\pi \frac{(\tau ^n-u)s}{p}}  \\[.3cm]
		&=&   	\frac{1}{p} \sum_{2 \leq u\leq x}\sum_{\substack{1\leq n<p/x\\\gcd(n,p-1)=1}}   \frac{e^{i2\pi \frac{(\tau ^n-u)}{p}}-e^{i2\pi (\frac{\tau ^n-u}{p})(\frac{p}{2}+1)}}{1-e^{i2\pi \frac{(\tau ^n-u)}{p}}} \nonumber\\[.3cm]
		&\leq&   	\frac{1}{p} \sum_{2 \leq u\leq x}\sum_{\substack{1\leq n<p/x\\\gcd(n,p-1)=1}}  \frac{2}{|\sin\pi(\tau ^n-u)/p|} \nonumber\\[.3cm]
		&\ll&  (\log x)(\log p)\nonumber.
	\end{eqnarray}
	This is computed in the way as done in \eqref{eq9955SD.700m} to \eqref{eq9955SD.700u1}, mutatis mutandis. Thus, \\		
	\begin{equation}
		E_{0}(x)	=E_{0,0}(x)\;+\;E_{0,1}(x)\ll  (\log x)(\log p).
	\end{equation}
This completes the verification.	
\end{proof}
\begin{lem}   \label{lem9955SD.750}\hypertarget{lem9955SD.750} Let $p>1$ be a large prime number and let $x=O(p^{\varepsilon})$ be a real number and $\varepsilon>0$ is a small number. If there is no primitive root $u\leq x$ then 
	\begin{equation} 
		E_{1}(x)=\sum_{2 \leq u\leq x}	\frac{1}{p}\sum_{\substack{p/x\leq n\leq p-1\\\gcd(n,p-1)=1}}  \sum_{ 1\leq s\leq p-1} e^{i2\pi \frac{(\tau ^n-u)s}{p}}  = 	 O\left(\frac{(\log p)^2}{p^{1/2-\delta}}\cdot x \right) \nonumber,
	\end{equation} 
	where $\delta>0$ is a small real number. 	
\end{lem}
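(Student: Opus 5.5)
The plan is to interchange the order of summation so that the sum over $s$ becomes a twisted exponential sum of the type controlled by \hyperlink{lem9933ERP.230V}{Lemma} \ref{lem9933ERP.230V}. Expanding the phase $e^{i2\pi(\tau^n-u)s/p}=e^{-i2\pi us/p}\,e^{i2\pi s\tau^n/p}$ and moving the sum over $n$ inside gives
\begin{equation}
E_1(x)=\frac{1}{p}\sum_{2\leq u\leq x}\;\sum_{1\leq s\leq p-1} e^{-i2\pi \frac{us}{p}}\sum_{\substack{p/x\leq n\leq p-1\\ \gcd(n,p-1)=1}} e^{i2\pi \frac{s\tau^n}{p}}
=\frac{1}{p}\sum_{2\leq u\leq x} \widehat{V_1(u)},\nonumber
\end{equation}
where $\widehat{V_1(u)}$ denotes the analogue of $\widehat{V(u)}$ in which $n$ is restricted to the range $p/x\leq n\leq p-1$. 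It therefore suffices to prove the uniform bound
\begin{equation}
\widehat{V_1(u)}\ll p^{1/2+\delta}(\log p)^2\nonumber
\end{equation}
for every $u\leq x$. Summing this trivially over the at most $x$ values of $u$ and dividing by $p$ then gives $E_1(x)\ll x(\log p)^2/p^{1/2-\delta}$, which is the claim.

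To get the uniform bound, I would write the restricted $n$-range as the complete range minus the initial segment:
\begin{equation}
\widehat{V_1(u)}=\widehat{V(u)}-\sum_{1\leq s\leq p-1} e^{-i2\pi \frac{us}{p}}\sum_{\substack{1\leq n< p/x\\ \gcd(n,p-1)=1}} e^{i2\pi \frac{s\tau^n}{p}}.\nonumber
\end{equation}
By hypothesis no $u\leq x$ is a primitive root, and $x=O(p^{\varepsilon})$. Hence \hyperlink{lem9933ERP.230V}{Lemma} \ref{lem9933ERP.230V} applies with $a=u$ and bounds the complete term $\widehat{V(u)}$ by $O(p^{1/2+\delta}(\log p)^2)$, with an implied constant independent of $u$.

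The main obstacle is the second, short-range piece. A trivial use of \hyperlink{lem9933ERP.120}{Lemma} \ref{lem9933ERP.120} for each fixed $s$, followed by summing over $s$, loses a full factor of $p$ and is useless. I would first try to show that the argument behind \hyperlink{lem9933ERP.230V}{Lemma} \ref{lem9933ERP.230V} (cited from \cite[Lemma 4.1]{CN2017}) is insensitive to the length of the $n$-interval. The sum over $b$ there is exploited through the orthogonality $\sum_b e^{i2\pi b(\tau^n-a)/p}=-1$ for $\tau^n\neq a$, and this holds for any subset of the $n$, in particular for $n<p/x$. Carried out directly, that orthogonality gives the short piece equal to minus the number of admissible $n<p/x$, since $\tau^n\neq u$. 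This is of size $\leq p/x$, which is not small enough on its own.

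So the fallback is to keep the short piece paired with the complete one and to apply the square-root cancellation estimate of \cite[Theorem 4.3]{CN2017} to the restricted sum over $n\in[p/x,p-1]$ as a whole. The cancellation there comes from the structure of $\tau^n$ on long intervals, which $[p/x,p-1]$ is, since $x=O(p^{\varepsilon})$. Verifying that this estimate transfers to the interval $[p/x,p-1]$, uniformly in $u$, is the step on which the proof of the lemma rests.
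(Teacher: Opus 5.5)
The step you defer at the end (square-root cancellation for the restricted transform $\widehat{V_1(u)}$ over $n\in[p/x,p-1]$) cannot be supplied, because it is false. Your own orthogonality computation for the short piece already shows why, since it never used $n<p/x$. For every $n$ with $\gcd(n,p-1)=1$, $\tau^n$ is a primitive root while $u$ is not. So $\tau^n\not\equiv u$ and $\sum_{1\le s\le p-1}e^{i2\pi(\tau^n-u)s/p}=-1$. Hence, exactly,
\[
\widehat{V_1(u)}=-\#\{p/x\le n\le p-1:\gcd(n,p-1)=1\},\qquad E_1(x)=-\frac{\lfloor x\rfloor-1}{p}\,\#\{p/x\le n\le p-1:\gcd(n,p-1)=1\}.
\]
For $x\ge 2$, the involution $n\mapsto p-1-n$ shows the count is at least $\varphi(p-1)/2$. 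Therefore $|E_1(x)|\gg x\varphi(p-1)/p\gg x/\log\log p$, which is far larger than $x(\log p)^2p^{-1/2+\delta}$. Concretely, take $x=2$ and any large prime $p\equiv\pm1\bmod 8$, so that $2$ is a square and hence not a primitive root; then $E_1(2)=-\varphi(p-1)/(2p)$. Once the $s$-sum is complete, the structure of $\tau^n$ is invisible, so no transfer of \cite[Theorem 4.3]{CN2017} to the interval $[p/x,p-1]$ can produce cancellation. The lemma as stated fails whenever its hypothesis holds.

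The same identity over the full range gives $\widehat{V(u)}=-\varphi(p-1)$, which is the paper's own \eqref{eq9955P.230a}. Since $\varphi(p-1)\gg p/\log\log p$, this is incompatible with Lemma \ref{lem9933ERP.230V}, so your use of that lemma on the complete piece is not legitimate either. The paper's proof takes exactly your route: complete range minus initial segment, then Lemma \ref{lem9933ERP.230V}. But in \eqref{eq9955SD.750f} it simply discards the $n\le p/x$ piece, so it does not contain the missing step. You were right that this piece, of size up to $p/x$, cannot be dropped. The correct conclusion, though, is that $E(x)=E_0(x)+E_1(x)=-(\lfloor x\rfloor-1)\varphi(p-1)/p$ exactly cancels the main term. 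This must happen, since $\sum_{2\le u\le x}\Psi(u)=0$ under the hypothesis, so no bound of the claimed form is available.
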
 
\begin{proof}[\textbf{Proof}] To determine a nontrivial upper bound, rearrange the triple finite sum as
	\begin{eqnarray} \label{eq9955SD.750d}
		E_{1}(x)&=&\sum_{2 \leq u\leq x}
		\frac{1}{p}\sum_{\substack{p/x\leq n\leq p-1\\\gcd(n,p-1)=1}}  \sum_{ 1\leq s\leq p-1} e^{\frac{i2\pi (\tau ^n-u)s}{p}}\\	[.3cm]
		&=&  \frac{1}{p} \sum_{2 \leq u\leq x ,} \sum _{1\leq s\leq p-1}e^ {\frac{-i2\pi us}{p}} \sum_{\substack{p/x\leq n\leq p-1\\\gcd(n,p-1)=1}}  e^ {\frac{i2 \pi s\tau ^{n}}{p}}  \nonumber\\[.3cm]
		&=&  \frac{1}{p} \sum_{2 \leq u\leq x,}\sum _{1\leq s\leq p-1}e^ {\frac{-i2\pi us}{p}} \left(\sum_{\substack{1\leq n\leq p-1\\\gcd(n,p-1)=1}}  e^ { \frac{i2 \pi s\tau ^{n}}{p}}- \sum_{\substack{1\leq n\leq p/x\\\gcd(n,p-1)=1}}  e^ { \frac{i2 \pi s\tau ^{n}}{p}}  \right)  \nonumber.
	\end{eqnarray}
Taking absolute value and an application of \hyperlink{lem9933ERP.230V}{Lemma} \ref{lem9933ERP.230V} to the double inner exponential (finite Fourier transform) yield
\begin{eqnarray} \label{eq9955SD.750f}
|E_{1}(x)|
&\ll&  \frac{1}{p} \sum_{2 \leq u\leq x} \left|\sum _{1\leq s\leq p-1}e^ {\frac{-i2\pi us}{p}} \sum_{\substack{1\leq n\leq p-1\\\gcd(n,p-1)=1}}  e^ { \frac{i2 \pi s\tau ^{n}}{p}} \right| \\[.3cm]
&\ll&   \frac{1}{p} \sum_{2 \leq u\leq x}
\left|p^{1/2+\delta}(\log p)^2\right| \nonumber\\[.3cm]
&\ll&   \frac{(\log p)^2}{p^{1/2-\delta}}\cdot x \nonumber,
\end{eqnarray}
since $x=O(p^{\varepsilon})$, where $\varepsilon>0$ and $\delta>0$ are small real numbers.
\end{proof}

\section{Error Terms for Dimension $k=2$} \label{S3355DD-E}\hypertarget{S3355DD-E}
The proof of the error term for consecutive pairs of primitive roots in \eqref{eq3355.400m}, (two dimensional) is reduced to the proof for error term for a single element (one dimension). Roughly speaking, the estimate for the one dimension case, proved in \hyperlink{S9955LAP-E}{Section} \ref{S9955LAP-E}, is employed here to complete the estimate for the two dimensional case. The subsets 
\begin{align}
S_{01}, \; S_{10},\; S_{11} \subset [0,p-1]\times [0,p-1]\}
\end{align}
occurring in the partition of $E(x)$ in \eqref{eq3355DD.350Vb} are defined in \eqref{eq3355.400p}. 

\begin{lem}  \label{lem33355DD.350V}\hypertarget{lem3355DD.350V} Let $p>1$ be a large prime number and let $x=O(p^{\varepsilon})$ be a real number and $\varepsilon>0$ is a small number. If the pair $u>1$ and $u+1$ are not consecutive primitive roots mod $p$ then 
	\begin{eqnarray}\label{eq3355DD.350Vb}
E(x)&=&\frac{1}{p^2}\sum _{2 \leq u\leq x}\sum_{\substack{1\leq n_1\leq p-1\\\gcd(n_1,p-1)=1}} \sum_{ \substack{0\leq s_1\leq p-1\\s_1=s_2\ne0}} \psi \left((\tau ^{n_1}-u)s_1\right)    \nonumber\\[.3cm] 
&&\hskip 1.5002 in \times \sum_{\substack{1\leq n_2\leq p-1\\\gcd(n_2,p-1)=1}} \sum_{ \substack{0\leq s_2\leq p-1\\s_1=s_2\ne0}} \psi \left((\tau ^{n_2}-u-1)s_2\right) \nonumber\\[.3cm] 
&=&O\left ((\log x)^2(\log p)^2\right )\nonumber, 
	\end{eqnarray} 
	where the notation $s_1=s_2\ne0$ denotes the exclusion of the point $(s_2,s_1)=(0,0)$.
\end{lem}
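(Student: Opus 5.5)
The plan is to reduce the two-dimensional error term to the one-dimensional estimates of Section \ref{S9955LAP-E}. The first step is to split the range of summation over $(s_1,s_2)\in[0,p-1]^2\setminus\{(0,0)\}$ into the three pieces $S_{01}=\{s_1=0,\ s_2\neq0\}$, $S_{10}=\{s_1\neq0,\ s_2=0\}$ and $S_{11}=\{s_1\neq0,\ s_2\neq0\}$. This writes $E(x)=E_{01}(x)+E_{10}(x)+E_{11}(x)$. For each $u$ I introduce the one-dimensional inner sums
\begin{equation*}
A(u)=\frac{1}{p}\sum_{\gcd(n_1,p-1)=1}\sum_{1\leq s_1\leq p-1}\psi\left((\tau^{n_1}-u)s_1\right),\qquad B(u)=\frac{1}{p}\sum_{\gcd(n_2,p-1)=1}\sum_{1\leq s_2\leq p-1}\psi\left((\tau^{n_2}-u-1)s_2\right).
\end{equation*}
On $S_{10}$ and $S_{01}$ the factor with zero frequency is trivial and equals $\varphi(p-1)/p\leq1$. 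Hence
\begin{equation*}
E_{10}(x)=\frac{\varphi(p-1)}{p}\sum_{2\leq u\leq x}A(u),\qquad E_{01}(x)=\frac{\varphi(p-1)}{p}\sum_{2\leq u\leq x}B(u).
\end{equation*}
Each of these is a one-dimensional error term as in Lemma \ref{lem9955SD.300E}; for $B$ the shift $u\mapsto u+1$ only moves the range to $[3,x+1]$. So both are $O((\log x)(\log p))$ via the splitting $E_0+E_1$ of Lemmas \ref{lem9955SD.700} and \ref{lem9955SD.750}.

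The main piece is $E_{11}(x)=\sum_{2\leq u\leq x}A(u)B(u)$. I would bound it by
\begin{equation*}
|E_{11}(x)|\leq\sum_{u}|A(u)|\,|B(u)|\leq\Big(\sum_{u}|A(u)|\Big)\Big(\sum_{u}|B(u)|\Big).
\end{equation*}
Next, I would check that the proofs of Lemmas \ref{lem9955SD.700} and \ref{lem9955SD.750} in fact bound $\sum_u|A(u)|$, not merely $|\sum_u A(u)|$. In Lemma \ref{lem9955SD.700} absolute values are already taken termwise via $2/|\sin\pi(\tau^n-u)/p|$, combined with the fiber count of Lemma \ref{lem9955P.300S}. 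In Lemma \ref{lem9955SD.750} the bound of Lemma \ref{lem9933ERP.230V} is applied for each fixed $u$. Thus each factor is $O((\log x)(\log p))$, and the product gives the claimed $O((\log x)^2(\log p)^2)$. Adding the three pieces finishes the estimate, since the $S_{01}$ and $S_{10}$ contributions are of smaller order.

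The main obstacle is the nonvanishing of the denominators $\tau^{n}-u$ and $\tau^n-u-1$. The one-dimensional lemmas assume that $u$ itself is not a primitive root. Here the hypothesis only says that $u$ and $u+1$ are not \emph{both} primitive roots, so for a given $u$ one of $A(u)$, $B(u)$ may contain a degenerate term with $\tau^n=u$ or $\tau^n=u+1$. I would handle this by observing that in that case the degenerate factor equals $1-\varphi(p-1)/p$ exactly, by Lemma \ref{lem9955.200A}, so it is bounded. The product $A(u)B(u)$ is then dominated by the nondegenerate factor, whose sum over $u$ is controlled by the one-dimensional argument with the exceptional $n$ removed. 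The delicate point is that the sine-approximation bound needs $p\nmid ab$ after passing through the fiber map of Lemma \ref{lem9955P.300S}. I would verify this separately for the subsets of $u$ where $u$, respectively $u+1$, is the nonprimitive member.
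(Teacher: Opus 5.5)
There is a genuine gap, and it cannot be patched: the sums $A(u)$ and $B(u)$ can be evaluated exactly, and they are not small. We have $\sum_{1\le s\le p-1}\psi(ms)=p\,\mathbf{1}[m\equiv 0 \bmod p]-1$, and $\tau^{n}\equiv u$ has a solution with $\gcd(n,p-1)=1$ (unique in $[1,p-1]$) precisely when $u$ is a primitive root. Hence
\[
A(u)=\Psi(u)-c,\qquad B(u)=\Psi(u+1)-c,\qquad c=\frac{\varphi(p-1)}{p},
\]
for \emph{every} $u$, not only in the degenerate case you computed. Since $c<1/2$, this gives $|A(u)|\ge c$ for all $u$. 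Since also $c\gg 1/\log\log p$, it follows that $\sum_{2\le u\le x}|A(u)|\ge c(\lfloor x\rfloor-1)\gg x/\log\log p$. This is far larger than $(\log x)(\log p)$ once $x\asymp(\log p)^2(\log\log p)^5$.

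You read Lemmas \ref{lem9955SD.700} and \ref{lem9955SD.750} correctly, in that they take absolute values for each $u$. But the bound they would give for $\sum_u|A(u)|$ is false, so those proofs are wrong. Your product bound for $E_{11}(x)$ in fact has size about $x^2/(\log\log p)^2$, worse than the trivial bound $x$. Likewise $E_{10}(x)=c\sum_u A(u)$ is $c$ times the discrepancy $\#\{\text{primitive roots in }[2,x]\}-c(\lfloor x\rfloor-1)$, which nothing here controls.

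The failure sits in the range $p/x\le n\le p-1$. By the same orthogonality, its contribution to $A(u)$ for non-primitive $u$ is exactly $-\frac{1}{p}\#\{p/x\le n\le p-1:\gcd(n,p-1)=1\}$, essentially $-c$. Lemma \ref{lem9955SD.750} instead claims $O(p^{-1/2+\delta}(\log p)^2)$ per $u$. That claim rests on Lemma \ref{lem9933ERP.230V}, which is contradicted by the exact value $-\varphi(p-1)$ recorded in \eqref{eq9955P.230a}.

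More fundamentally, adding your three pieces gives
\[
E(x)=\sum_{2\le u\le x}\bigl((A(u)+c)(B(u)+c)-c^2\bigr)=\sum_{2\le u\le x}\bigl(\Psi(u)\Psi(u+1)-c^2\bigr),
\]
which under the hypothesis equals $-c^2(\lfloor x\rfloor-1)$ exactly. So the statement has content only when $c^2x$ is large compared with $((\log x)(\log p))^2$. In that range it holds only if the hypothesis is never satisfied, i.e.\ only if some $2\le u\le x$ has $u$ and $u+1$ both primitive roots. It is therefore essentially a restatement of Theorem \ref{thm3355DD.850A}, and no argument that deduces it from these one-dimensional sums can be correct.

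The paper's proof has the same defect, since it too feeds in Lemma \ref{lem9955SD.300E}, and its bookkeeping is worse than yours:
\begin{itemize}
\item its sets $S_{01}$ and $S_{10}$ overlap $S_{11}$;
\item it declares $E(x,S_{01})=E(x,S_{10})=0$ by assuming that neither $u$ nor $u+1$ is a primitive root;
\item it replaces $\sum_u A(u)B(u)$ by a product of two sums over $u$ (with $u+1$ silently changed to $u$), on the grounds that $E(x,S_{11})$ is ``independent of $u$''.
\end{itemize}
Your disjoint partition, the formulas $E_{10}(x)=c\sum_uA(u)$ and $E_{01}(x)=c\sum_uB(u)$, and the inequality $\sum_u|A(u)||B(u)|\le(\sum_u|A(u)|)(\sum_u|B(u)|)$ are all correct. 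They cannot rescue an estimate that is equivalent to the theorem it is meant to prove.
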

\begin{proof}[\textbf{Proof}]The error term is rewritten as a sum of three suberror terms. Summing the estimates of these subterms computed in \hyperlink{lem3355DD.350A}{Lemma} \ref{lem33355DD.350A} to \hyperlink{lem3355DD.350C}{Lemma} \ref{lem33355DD.350C} yields
	\begin{eqnarray}\label{eq3355DD.350Vd}
E(x)&=&E(x,S_{01})\;+\; E(x,S_{10})\;+\; E(x,S_{11})\\[.3cm] 
&\ll&0\;+\;0\;+\;(\log x)^2(\log p)^2 \nonumber\\[.3cm] 
&\ll&(\log x)^2(\log p)^2\nonumber. 
	\end{eqnarray} 
	This completes the estimate of the error term.
\end{proof}
\begin{lem}  \label{lem33355DD.350A}\hypertarget{lem3355DD.350A} Let $p>1$ be a large prime number and let $x=O(p^{\varepsilon})$ be a real number and $\varepsilon>0$ is a small number. If the pair $u>1$ and $u+1$ are not consecutive primitive roots mod $p$ then 
	\begin{align}\label{eq3355DD.350Ab}
E(x,S_{01})&=		\frac{1}{p^2}\sum _{2 \leq u\leq x}\sum_{\substack{1\leq n_1\leq p-1\\\gcd(n_1,p-1)=1}} \sum_{ 1\leq s_1\leq p-1} \psi \left((\tau ^{n_1}-u)s_1\right)    \nonumber\\[.3cm] 
&\hskip 1.5002 in \times \sum_{\substack{1\leq n_2\leq p-1\\\gcd(n_2,p-1)=1}} \sum_{ 0\leq s_2\leq p-1} \psi \left((\tau ^{n_2}-u-1)s_2\right) \nonumber\\[.3cm] 
&=0\nonumber. 
	\end{align} 
\end{lem}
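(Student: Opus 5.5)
The plan is to factor the summand for each fixed $u$ and evaluate each factor with the divisorfree characteristic function of Lemma \ref{lem9955.200A}. For fixed $u\in[2,x]$ the expression $E(x,S_{01})$ is a sum over $u$ of $p^{-2}A(u)B(u)$, where
\begin{equation*}
A(u)=\sum_{\substack{1\leq n_1\leq p-1\\ \gcd(n_1,p-1)=1}}\sum_{1\leq s_1\leq p-1}\psi\left((\tau^{n_1}-u)s_1\right),\qquad
B(u)=\sum_{\substack{1\leq n_2\leq p-1\\ \gcd(n_2,p-1)=1}}\sum_{0\leq s_2\leq p-1}\psi\left((\tau^{n_2}-u-1)s_2\right).
\end{equation*}
The factor $B(u)$ runs over the complete range of $s_2$. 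Lemma \ref{lem9955.200A} therefore gives exactly $B(u)=p\,\Psi(u+1)$. The factor $A(u)$ omits only the term $s_1=0$, which contributes $\varphi(p-1)$. Hence $A(u)=p\,\Psi(u)-\varphi(p-1)$, and
\begin{equation*}
E(x,S_{01})=\frac{1}{p}\sum_{2\leq u\leq x}\bigl(p\,\Psi(u)-\varphi(p-1)\bigr)\Psi(u+1).
\end{equation*}

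Next I would examine each summand according to whether $u+1$ is a primitive root. If $u+1$ is not a primitive root, then $\Psi(u+1)=0$ and the summand vanishes identically. If $u+1$ is a primitive root, the hypothesis that $u,u+1$ is not a consecutive pair forces $\Psi(u)=0$. The summand is then $-\varphi(p-1)/p$, which is not zero.

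This second case is the main obstacle. The claimed identity $E(x,S_{01})=0$ holds exactly when no $u+1\in[3,x+1]$ is a primitive root, and otherwise $E(x,S_{01})$ equals $-\varphi(p-1)/p$ times the number of such $u$. So I would first try to strengthen the hypothesis, in the spirit of Lemmas \ref{lem9955SD.700} and \ref{lem9955SD.750}, to the assumption that there is no primitive root $v$ with $3\leq v\leq x+1$. Under that assumption every summand vanishes and the lemma follows at once.

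If instead only the weaker "no consecutive pair" hypothesis is available, the computation above yields the exact value
\begin{equation*}
E(x,S_{01})=-\frac{\varphi(p-1)}{p}\,\#\{2\leq u\leq x:\ u+1\ \text{is a primitive root}\}.
\end{equation*}
This is not $0$. Its absolute value is of order $x\varphi(p-1)/p$, which is comparable to the main term of Lemma \ref{lem3355.300T} rather than negligible. I would record it as such and check whether it can be absorbed, with the correct sign, when the three terms of Lemma \ref{lem3355DD.350V} are combined. I expect this step to fail as stated: the constant $0$ in that lemma should be replaced by this counting term, and the final comparison with the main term has to be redone.
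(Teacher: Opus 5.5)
Your computation is correct, and the obstruction you isolate is exactly the weak point of the paper's own proof. The paper disposes of the complete $s_2$-sum by citing ``the hypothesis $\tau^{n_2}-u-1\neq 0$ for $u\leq x$'' for every $n_2$ coprime to $p-1$. That is precisely the assertion that $u+1$ is not a primitive root for any $2\le u\le x$, i.e.\ your strengthened hypothesis, and it does not follow from the hypothesis as stated. Under that stronger assumption your argument and the paper's are the same: the paper's vanishing $s_2$-sum is your $B(u)=p\,\Psi(u+1)=0$.

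Under the hypothesis actually stated, your evaluation gives the unconditional identity
\begin{equation*}
E(x,S_{01})=\sum_{2\le u\le x}\Psi(u)\Psi(u+1)-\frac{\varphi(p-1)}{p}\sum_{2\le u\le x}\Psi(u+1).
\end{equation*}
The stated hypothesis kills the first sum. So the lemma is false whenever some $u+1\le x+1$ is a primitive root. Concretely, suppose the least primitive root $g$ of $p$ satisfies $g\ge 3$, and take $x=g-1$. Then no $u\le x$ starts a consecutive pair, yet $E(x,S_{01})=-\varphi(p-1)/p\neq 0$. For $p=65537$ and $x=2$, the element $2$ has order $32$ and $3$ is a primitive root, so this gives $-2^{15}/65537$. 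The defect is therefore in the statement and in the paper's proof, not in your proposal, and it can only be repaired by changing the hypothesis.

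Your forecast for the downstream step is also right. Lemma \ref{lem33355DD.350B} makes the symmetric unstated assumption that $u$ itself is never a primitive root. Together, the two ``$=0$'' evaluations presuppose that $[2,x+1]$ contains no primitive root at all. This is strictly stronger than $N_2(x)=0$, and a contradiction drawn from it would only produce some primitive root in $[2,x+1]$, not a consecutive pair.

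With only $N_2(x)=0$ available, the $S_{01}$ contribution equals $-\frac{\varphi(p-1)}{p}$ times the number of primitive roots in $[3,x+1]$. If primitive roots up to $x$ occur with their expected density $\varphi(p-1)/(p-1)$, this is essentially $-M(x)$. It therefore cannot be absorbed into an $O((\log x)^2(\log p)^2)$ error, and the final comparison in Section \ref{S3355M-T} does not go through.
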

\begin{proof}[\textbf{Proof}] Replace $\psi(s)=e^{i 2 \pi ks/p}$. Use the hypothesis $\tau ^{n_2}-u-1\ne0$ for $u\leq x$ to evaluate the double sum indexed by $s_2$, that is,
\begin{equation}
\sum_{ 0\leq s_2\leq p-1} e^{\frac{i2\pi \left(\tau ^{n_1}-u-1\right)s_2}{p}}=0.
\end{equation}
This step yields
\begin{eqnarray}\label{eq3355DD.350Ad}
E(x,S_{01})&=&		\frac{1}{p^2}\sum _{2 \leq u\leq x}\sum_{\substack{1\leq n_1\leq p-1\\\gcd(n_1,p-1)=1}} \sum_{ 1\leq s_1\leq p-1} e^{\frac{i2\pi \left(\tau ^{n_1}-u\right)s_1}{p}} \sum_{\substack{1\leq n_2\leq p-1\\\gcd(n_2,p-1)=1}} 0\nonumber \\[.3cm] 
&=&0\nonumber.
\end{eqnarray}
\end{proof}

\begin{lem}  \label{lem33355DD.350B}\hypertarget{lem3355DD.350B} Let $p>1$ be a large prime number and let $x=O(p^{\varepsilon})$ be a real number and $\varepsilon>0$ is a small number. If the pair $u>1$ and $u+1$ are not consecutive primitive roots mod $p$ then 
\begin{eqnarray}	\label{eq3355DD.350Bb}
E(x,S_{10})&=&		\frac{1}{p^2}\sum _{2 \leq u\leq x}\sum_{\substack{1\leq n_1\leq p-1\\\gcd(n_1,p-1)=1}} \sum_{ 0\leq s_1\leq p-1} \psi \left((\tau ^{n_1}-u)s_1\right)    \nonumber\\[.3cm] 
&&\hskip 1.5002 in \times \sum_{\substack{1\leq n_2\leq p-1\\\gcd(n_2,p-1)=1}} \sum_{ 1\leq s_2\leq p-1} \psi \left((\tau ^{n_2}-u-1)s_2\right) \nonumber\\[.3cm] 
&=&0\nonumber.
\end{eqnarray}
\end{lem}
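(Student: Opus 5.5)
The plan is to mirror the proof of Lemma \ref{lem33355DD.350A}, with the roles of the two factors interchanged. In $E(x,S_{10})$ the complete sum, over $0\leq s_1\leq p-1$, now sits in the first factor, attached to $\tau^{n_1}-u$, while the second factor only runs over $1\leq s_2\leq p-1$. Substituting $\psi(s)=e^{i2\pi ks/p}$, the first step is to evaluate the complete inner sum by the geometric series formula:
\begin{equation*}
\sum_{0\leq s_1\leq p-1} e^{\frac{i2\pi (\tau^{n_1}-u)s_1}{p}}=\left\{
\begin{array}{ll}
p & \text{ if } \tau^{n_1}\equiv u \bmod p,\\
0 & \text{ if } \tau^{n_1}\not\equiv u \bmod p.
\end{array}\right.
\end{equation*}

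The second step is to use the hypothesis that $u$ is not a primitive root, exactly as the hypothesis on $u+1$ was used in Lemma \ref{lem33355DD.350A}. As $n_1$ ranges over the integers with $\gcd(n_1,p-1)=1$, the powers $\tau^{n_1}$ run exactly through the primitive roots (as in the proof of Lemma \ref{lem9955.200A}). So if $u$ is not a primitive root, then $\tau^{n_1}-u\neq0$ in $\F_p$ for every admissible $n_1$ and every $2\leq u\leq x$. Every inner $s_1$-sum is then $0$, each summand of the $(u,n_1)$ sum vanishes whatever the value of the second factor, and $E(x,S_{10})=0$. No estimate of the $n_2,s_2$ part is needed; it stays bounded trivially by $\varphi(p-1)(p-1)$, which is irrelevant once it is multiplied by $0$.

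The main obstacle is the form of the hypothesis. The statement only says that $u$ and $u+1$ are not \emph{both} primitive roots, so for a particular $u\leq x$ it may happen that $u$ is primitive while $u+1$ is not. In that case exactly one $n_1$ contributes $p$ to the first factor. The second factor then equals $\sum_{n_2}\sum_{1\leq s_2\leq p-1}\psi((\tau^{n_2}-u-1)s_2)=-\varphi(p-1)$, which is not $0$. To make the vanishing argument airtight, I would therefore do one of two things.

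\begin{itemize}
\item Read the hypothesis, as in Lemma \ref{lem33355DD.350A}, as ``neither $u$ nor $u+1$ is a primitive root for $u\leq x$''. This is what the $S_{01}$ argument implicitly used for $u+1$.
\item Split the $u$-range into those $u$ with $u$ non-primitive, which contribute $0$ by the argument above, and those with $u$ primitive but $u+1$ non-primitive. The latter contribute $\frac{1}{p^2}\cdot p\cdot(-\varphi(p-1))$ each, that is $O(1)$ per $u$ and $O(x)$ in total.
\end{itemize}

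I would first try the former reading, since it gives the exact value $0$ as stated. If only the weaker hypothesis is available, I would fall back on the split, which still leaves a contribution negligible against the main term of Lemma \ref{lem3355.300T}.
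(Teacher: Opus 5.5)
Your first route is exactly the paper's proof. After substituting $\psi(s)=e^{i2\pi ks/p}$, the complete sum over $0\le s_1\le p-1$ vanishes by the geometric series because $\tau^{n_1}\neq u$ in $\F_p$ for every admissible $n_1$ and every $2\le u\le x$. The paper asserts this non-vanishing ``for $u\le x$'' without comment, so it tacitly assumes that no $u\in[2,x]$ is a primitive root, just as the $S_{01}$ lemma tacitly assumes that no $u+1$ is.

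Your caveat is correct and exposes a real defect in the statement. The full $(n_1,s_1)$-sum equals $p\Psi(u)$, and the $(n_2,s_2)$-sum with $s_2\neq 0$ equals $p\Psi(u+1)-\varphi(p-1)$. Hence, exactly,
\[
E(x,S_{10})=\sum_{2\le u\le x}\Psi(u)\Psi(u+1)-\frac{\varphi(p-1)}{p}\sum_{2\le u\le x}\Psi(u).
\]
Under the literal hypothesis the first sum is $0$, so $E(x,S_{10})=-\frac{\varphi(p-1)}{p}\,\#\{2\le u\le x:\ u \text{ primitive}\}$. This is nonzero, for example, for any $p$ whose least primitive root is $3$, taking $x=3$: the hypothesis holds because $4$ is a square, yet $E(3,S_{10})=-\varphi(p-1)/p$.

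The part of your proposal that does not hold up is the fallback. The leftover term is not negligible against the main term $\bigl(\frac{\varphi(p-1)}{p}\bigr)^2x$ of Lemma \ref{lem3355.300T}. Your bound $O(x)$ is weaker than the size of that main term, so it cannot show negligibility. Moreover, by the formula above, whenever primitive roots occur in $[2,x]$ with roughly their expected density $\varphi(p-1)/(p-1)$, the leftover term has the same order as the main term and the opposite sign. So the split cannot rescue the argument of Theorem \ref{thm3355DD.850A}. Its hypothesis $N_2(x)=0$ only supplies the weak reading, and only the strong reading (no primitive root in $[2,x]$) gives the lemma as stated.
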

\begin{proof}[\textbf{Proof}]Replace $\psi(s)=e^{i 2 \pi ks/p}$. Use the hypothesis $\tau ^{n_1}-u\ne0$ for $u\leq x$ to evaluate the double sum indexed by $s_1$, that is,
\begin{equation}
\sum_{ 0\leq s_1\leq p-1} e^{\frac{i2pi \left(\tau ^{n_1}-u\right)s_1}{p}}=0.
\end{equation}
This step yields
\begin{eqnarray}\label{eq3355DD.350Bd}
E(x,S_{10})&=&		\frac{1}{p^2}\sum _{2 \leq u\leq x}\sum_{\substack{1\leq n_1\leq p-1\\\gcd(n_1,p-1)=1}} 0 \sum_{\substack{1\leq n_2\leq p-1\\\gcd(n_2,p-1)=1}} \sum_{ 1\leq s_2\leq p-1} e^{\frac{i2pi \left(\tau ^{n_2}-u-1\right)s_2}{p}}\nonumber \\[.3cm] 
&=&0.
\end{eqnarray}

\end{proof}
\begin{lem}  \label{lem33355DD.350C}\hypertarget{lem3355DD.350C} Let $p>1$ be a large prime number and let $x=O(p^{\varepsilon})$ be a real number, where $\varepsilon>0$ is a small number. If the pair $u>1$ and $u+1$ are not consecutive primitive roots mod $p$ then 
\begin{eqnarray}	\label{eq3355DD.350Cb}
E(x,S_{11})&=&		\frac{1}{p^2}\sum _{2 \leq u\leq x}\sum_{\substack{1\leq n_1\leq p-1\\\gcd(n_1,p-1)=1}} \sum_{ 1\leq s_1\leq p-1} \psi \left((\tau ^{n_1}-u)s_1\right)    \nonumber\\[.3cm] 
&&\hskip 1.5002 in \times \sum_{\substack{1\leq n_2\leq p-1\\\gcd(n_2,p-1)=1}} \sum_{ 1\leq s_2\leq p-1} \psi \left((\tau ^{n_2}-u-1)s_2\right) \nonumber\\[.3cm] 
&=&O\left ((\log x)^2(\log p)^2\right )\nonumber.
\end{eqnarray}
\end{lem}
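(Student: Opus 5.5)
The plan is to reduce the two-dimensional sum $E(x,S_{11})$ to the one-dimensional estimates of Section \ref{S9955LAP-E}. For each fixed $u\in[2,x]$ the summand factors as a product $A(u)B(u)$, where
\begin{equation}
A(u)=\frac{1}{p}\sum_{\substack{1\leq n_1\leq p-1\\ \gcd(n_1,p-1)=1}}\sum_{1\leq s_1\leq p-1}\psi\left((\tau^{n_1}-u)s_1\right),\qquad
B(u)=\frac{1}{p}\sum_{\substack{1\leq n_2\leq p-1\\ \gcd(n_2,p-1)=1}}\sum_{1\leq s_2\leq p-1}\psi\left((\tau^{n_2}-u-1)s_2\right).\nonumber
\end{equation}
Thus $E(x,S_{11})=\sum_{2\leq u\leq x}A(u)B(u)$. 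I would first bound this by $\sum_{u}|A(u)|\,|B(u)|$. Since all terms are controlled in absolute value, the shift $u\mapsto u+1$ in $B$ only changes the range of the shift parameter to $[3,x+1]$, so the same estimates apply to both factors.

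Next, each factor is split exactly as in Lemma \ref{lem9955SD.300E}: the range $n<p/x$ is handled by geometric summation and the sine approximation, and the range $p/x\leq n\leq p-1$ by the exponential sum bound of Lemma \ref{lem9933ERP.230V}. The goal is a bound for $|A(u)|$ that keeps the dependence on $u$ visible, of the shape $\ll (\log p)/u$ plus a negligible $O(p^{-1/2+\delta}(\log p)^2)$. This comes from running the chain \eqref{eq9955SD.700u1} with the $u$-sum removed. The fibre count of Lemma \ref{lem9955P.300S} lets one replace $\tau^{n}-u$ by a product $ab$ with $a\asymp u$, and $|\sin \pi ab/p|\gg ab/p$ then gives $\sum_b 1/(ab)\ll (\log p)/a$. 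The same bound holds for $|B(u)|$ with $u$ replaced by $u+1$. Multiplying and summing over $u$ gives
\begin{equation}
E(x,S_{11})\ll \sum_{2\leq u\leq x}\frac{(\log p)^2}{u(u+1)}+\text{(negligible)}\ll (\log x)^2(\log p)^2,\nonumber
\end{equation}
which is generous, since the $u$-sum even converges. Alternatively, one can bound $\sum_u|A(u)||B(u)|\leq\big(\sum_u|A(u)|\big)\big(\sum_u|B(u)|\big)$ whenever each factor is known to be nonnegative in size. Each sum is then $O((\log x)(\log p))$ directly by Lemma \ref{lem9955SD.300E}, and the target bound follows immediately; this is the route I would write down.

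The main obstacle is the step that bounds each factor individually. The inner $s$-sum can be evaluated exactly: for $\tau^{n}-u\neq0$ one has $\sum_{1\leq s\leq p-1}\psi((\tau^n-u)s)=-1$. Hence, under the hypothesis that $u$ is not a primitive root, $A(u)=-\varphi(p-1)/p$ exactly, and likewise for $B(u)$. This exact value is not small. So the sine-approximation bound, or the bound of Lemma \ref{lem9955SD.300E}, has to be checked against this identity rather than taken on faith. I would first verify the claimed bound of Lemma \ref{lem9955SD.300E} against this exact value, and in particular check whether the fibre-counting replacement of $\tau^n-u$ by $ab$ is legitimate. Only if that is consistent would I carry the product argument above through; otherwise the statement needs the main-term contribution $\sum_u \varphi(p-1)^2/p^2$ to be absorbed into $M(x)$, not into the error term.
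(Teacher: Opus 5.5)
The obstacle you flag in your last paragraph is real, and the check you propose comes out negative. Both of your routes fail, so the proposal does not prove the statement.

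\textbf{The exact identity.} Separating the $s=0$ term in Lemma \ref{lem9955.200A} gives, for \emph{every} $u$,
\[
A(u)=\Psi(u)-c,\qquad B(u)=\Psi(u+1)-c,\qquad c=\varphi(p-1)/p\gg 1/\log\log p.
\]
Hence
\[
E(x,S_{11})=\sum_{2\leq u\leq x}\bigl(\Psi(u)-c\bigr)\bigl(\Psi(u+1)-c\bigr),
\]
and $|A(u)|\in\{c,1-c\}$ is never small.

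\textbf{Route~1.} Your pointwise bound $|A(u)|\ll(\log p)/u+O(p^{-1/2+\delta}(\log p)^2)$ is false for all $u\gg\log p\log\log p$. The $x$-independence of your resulting bound should already have been a warning, since $E(x,S_{11})=c^2(\lfloor x\rfloor-1)$ whenever $[2,x+1]$ contains no primitive root.
\begin{itemize}
\item For non-primitive $u$, the range $p/x\leq n<p$ contributes exactly $-\frac1p\#\{p/x\leq n<p:\gcd(n,p-1)=1\}\approx -c$. This is precisely the piece that you, and Lemma \ref{lem9955SD.750}, treat as negligible.
\item Its input, Lemma \ref{lem9933ERP.230V}, contradicts the paper's own evaluation \eqref{eq9955P.230a}. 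For $a=4$, a square and so never a primitive root, that sum equals $-\varphi(p-1)$, not $O(p^{1/2+\delta}(\log p)^2)$.
\item Lemma \ref{lem9955P.300S} only compares fibre sizes. It gives no correspondence $\tau^n-u\leftrightarrow ab$ with $a\asymp u$ and $ab<p$. For fixed $u$ the residues $\tau^n-u$ are distinct, so the honest sine bound is $\ll\log p$ per $u$, which after summing over $u$ is no better than trivial.
\end{itemize}

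\textbf{Route~2.} The inequality $\sum_u|A(u)B(u)|\leq(\sum_u|A(u)|)(\sum_u|B(u)|)$ is valid. However, Lemma \ref{lem9955SD.300E} controls $|\sum_u A(u)|$, not $\sum_u|A(u)|$.
\begin{itemize}
\item The two agree only when no $u$ in the range is a primitive root. That is stronger than the ``no consecutive pair'' hypothesis, and for $B$ you would need it on $[3,x+1]$.
\item Worse, under its own hypothesis the left side of Lemma \ref{lem9955SD.300E} is exactly $-(\lfloor x\rfloor-1)c$. The lemma would therefore force the least primitive root to be $\ll\log p(\log\log p)^2$, which is not known even under GRH. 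Its proof cannot be correct.
\end{itemize}

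\textbf{The paper's proof.} It is your Route~2, reached through an invalid step. It declares $E(x,S_{11})$ ``independent of $u$'' and bounds $|E|\leq|\sum_u E|$. It then replaces $\sum_uA(u)B(u)$ by $(\sum_uA(u))(\sum_uA(u))$, silently dropping the shift $u+1$. Finally it applies Lemma \ref{lem9955SD.300E} to each factor. So it breaks exactly where you suspected.

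\textbf{What the statement actually asserts.} Under the lemma's hypothesis each summand above is $c^2$ or $-c(1-c)$, so
\[
E(x,S_{11})=c^2(\lfloor x\rfloor-1)-c\,(P_1+P_2),
\]
where $P_1$ and $P_2$ count the primitive roots in $[2,x]$ and $[3,x+1]$. The claimed bound is therefore a statement about how many primitive roots lie below a polylogarithmic height. Nothing in the paper supplies that, and it is beyond current methods.

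Your closing diagnosis is the right one: the $s\neq0$ terms are not an error term. The $(s_1,s_2)\neq(0,0)$ terms sum identically to $N_2(x)-M(x)$. Bounding them by $o(M(x))$ is the asymptotic for $N_2(x)$ itself, not a lemma on the way to it.
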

\begin{proof}[\textbf{Proof}]To compute a nontrivial upper bound, observe that $E(x,S_{11})$ is a function of $x$ and $p$, which is independent of $u$. Consequently,
\begin{equation}
|E(x,S_{11})|\leq \Bigg |\sum _{2 \leq u\leq x}E(x,S_{11})\Bigg |.
\end{equation}
Next, replacing $\psi(s)=e^{i 2 \pi ks/p}$ yields
\begin{eqnarray}\label{eq3355DD.350Cd}
|E(x,S_{11})|&\leq &\Bigg |\sum _{2 \leq u\leq x}E(x,S_{11})\Bigg | \\[.3cm] &\leq &		\Bigg |\sum _{2 \leq u\leq x,}\sum _{2 \leq u\leq x}\frac{1}{p}\sum_{\substack{1\leq n_1\leq p-1\\\gcd(n_1,p-1)=1}} \sum_{ 1\leq s_1\leq p-1} e^{\frac{i2 \pi  \left(\tau ^{n_1}-u\right)s_1}{p}}  \nonumber  \\[.3cm] 
&&\hskip 1.5002 in \times \frac{1}{p}\sum_{\substack{1\leq n_2\leq p-1\\\gcd(n_2,p-1)=1}} \sum_{ 1\leq s_2\leq p-1} e^{\frac{i2 \pi  \left(\tau ^{n_2}-u\right)s_2}{p}}\Bigg |\nonumber\\[.3cm]
&\leq &		\Bigg |\sum _{2 \leq u\leq x}\frac{1}{p}\sum_{\substack{1\leq n_1\leq p-1\\\gcd(n_1,p-1)=1}} \sum_{ 1\leq s_1\leq p-1} e^{\frac{i2 \pi  \left(\tau ^{n_1}-u\right)s_1}{p}} \Bigg | \nonumber  \\[.3cm] 
&&\hskip 1.5002 in \times \Bigg |\sum _{2 \leq u\leq x}\frac{1}{p}\sum_{\substack{1\leq n_2\leq p-1\\\gcd(n_2,p-1)=1}} \sum_{ 1\leq s_2\leq p-1} e^{\frac{i2 \pi  \left(\tau ^{n_2}-u\right)s_2}{p}}\Bigg |\nonumber.
\end{eqnarray}
Applying \hyperlink{lem9955SD.300E}{Lemma} \ref{lem9955SD.300E} to the two independent triple sums yields
\begin{eqnarray}\label{eq3355DD.350Ce}
|E(x,S_{11})|&\ll&	\log x)(\log p)\cdot 	(\log x)(\log p)\nonumber \\[.3cm] 
&\ll&(\log x)^2(\log p)^2\nonumber.
\end{eqnarray}
\end{proof}
\section{Smallest Pairs of Consecutive Primitive Roots} \label{S3355M-T}\hypertarget{S3355M-T}
The determination of an upper bound for the smallest primitive root in arithmetic progressions modulo $p$ is based on a new characteristic function for primitive roots in finite field $\F_p$ developed in \hyperlink{S9955D}{Section} \ref{S9955D}.

\begin{proof}[\textbf{Proof}] (\hyperlink{thm3355DD.850A}{Theorem} \ref{thm3355DD.850A}) Let \(p>2\) be a large prime number and let $x=O(p^{\varepsilon})$ be a real number, where $\varepsilon>0$ is a small number. Suppose the least pair of consecutive primitive roots $u,u+1\in (x,p)$ and consider the sum of the characteristic function over the short interval \([2,x]\), that is, 
	\begin{equation} \label{eq3355.400l}
		N_2(x)=\sum _{2 \leq u\leq x} \Psi (u)\Psi (u+1)=0.
	\end{equation}
	Replacing the characteristic function, \hyperlink{lem9955.200A}{Lemma} \ref{lem9955.200A}, and expanding the nonexistence equation \eqref{eq3355.400l} yield
	\begin{eqnarray} \label{eq3355.400m}
		N_2(x)&=&\sum _{2 \leq u\leq x} \Psi (u)\Psi (u+1)  \\[.3cm]
		&=&\sum _{2 \leq u\leq x} \left (\frac{1}{p}\sum_{\substack{1\leq n_1\leq p-1\\\gcd(n_1,p-1)=1}} \sum_{ 0\leq s_1\leq p-1} \psi \left((\tau ^{n_1}-u)s_1\right) \right ) \nonumber\\[.3cm] 
&&\hskip 1 in \times \left (\frac{1}{p}\sum_{\substack{1\leq n_2\leq p-1\\\gcd(n_2,p-1)=1}}  \sum_{ 0\leq s_2\leq p-1} \psi \left((\tau ^{n_2}-u-1)s_2\right) \right )\nonumber\\[.3cm] 
		&=&\sum _{2 \leq u\leq x} \frac{1}{p}\sum_{\substack{1\leq n_1\leq p-1\\\gcd(n_1,p-1)=1,}} \sum_{\substack{1\leq n_2\leq p-1\\\gcd(n_2,p-1)=1}}  1 \nonumber\\[.3cm] 
&&\hskip 1 in+	\frac{1}{p^2}\sum _{2 \leq u\leq x}\sum_{\substack{1\leq n_1\leq p-1\\\gcd(n_1,p-1)=1}} \sum_{ 0<s_1\leq p-1} \psi \left((\tau ^{n_1}-u)s_1\right)    \nonumber\\[.3cm] 
&&\hskip 1.75 in \times \sum_{\substack{1\leq n_2\leq p-1\\\gcd(n_2,p-1)=1}} \sum_{ 0<s_2\leq p-1} \psi \left((\tau ^{n_2}-u)s_2\right)\nonumber\\[.3cm] 
		&=&M(x)\; +\; E(x)\nonumber,
	\end{eqnarray} 
The domain $[0,p-1]\times [0,p-1]$ is partitioned as a disjoint union of 4 subsets
\begin{align}\label{eq3355.400p}
S_{00}&=\{(0,0)\}\\[.3cm]
S_{01}&=\{(s_2,s_1)\in [0,p-1]\times [1,p-1]\}\\[.3cm]
S_{10}&=\{(s_2,s_1)\in [1,p-1]\times [0,p-1]\}\\[.3cm]
S_{11}&=\{(s_2,s_1)\in [1,p-1]\times [1,p-1]\}. 
\end{align}
The first subset $S_{00}=\{(0,0)\}$ determines the main term $M(x)=M(x,S_{00})$, the evaluation appears in \hyperlink{lem3355.300T}{Lemma} \ref{lem3355.300T}. The remaining three subsets $S_{s_2,s_1}=\{(s_2,s_1)\ne(0,0)\}$ determines the error term $E(x)=E(x, S_{s_2,s_1})$. A nontrivial upper bound is computed in \hyperlink{lem33355DD.350V}{Lemma} \ref{lem33355DD.350V}. \\
Substituting these estimates and $x=(\log p)^2(\log \log p)^5$ yield
\begin{eqnarray} \label{eq9955.400p}
N_2(x)&=&	M(x)\; +\; E(x) \\[.3cm]
&=&\left[ \left (\frac{\varphi(p-1)}{p} \right )^2 \cdot  x+O(1) \right]  +\left[ O\left((\log x)^2(\log p)^2 \right) \right] \nonumber\\[.3cm]
&=& \left (\frac{\varphi(p-1)}{p} \right )^2 \cdot (\log p)^2(\log \log p)^5+O\left((\log p)^2(\log \log p)^2 \right) \nonumber.
	\end{eqnarray} 
	For any prime $p$, the totient function is bounded away from zero, that is,
	\begin{eqnarray}\label{eq9955.400r}
		\frac{p-1}{p}\cdot \frac{\varphi(p-1)}{p-1}&=&\frac{p-1}{p}\prod_{r\mid p-1}\left( 1-\frac{1}{r}\right)
		\\&\gg&\frac{1}{\log\log p}>0 \nonumber,
	\end{eqnarray}	
	where $r\geq2$ ranges over the prime divisor of $p-1$, see {\color{red}\cite[Theorem 6.12]{DP2016}}. Consequently, the main term in \eqref{eq9955.400p} dominates the error term:
	
\begin{eqnarray} \label{eq9955.400v}
N_2(x)&=&\left (\frac{\varphi(p-1)}{p} \right )^2 \cdot (\log p)^2(\log \log p)^5+O\left((\log p)^2(\log \log p)^2 \right)	\\[.3cm]	
&\gg&\left (\frac{1}{\log \log p}\right )^2  \cdot  (\log p)^2(\log \log p)^5+ O\left((\log p)^2(\log \log p)^2 \right) \nonumber\\[.3cm]
		&\gg&(\log p)^2(\log \log p)^3 \nonumber\\[.3cm]
		&>&0 \nonumber
	\end{eqnarray} 
	as $x\to\infty$. Clearly, this contradicts the hypothesis \eqref{eq3355.400l} for all sufficiently large prime numbers $p \geq p_0$. Therefore, there exists a small pair of consecutive primitive roots \begin{equation}
2\leq u, u+1\leq (\log p)^2(\log \log p)^5
	\end{equation}
for all sufficiently large primes $p$.
\end{proof}

\section{Some Numerical Data}\label{S3355NE}
A few examples were compiled to verify the upper bound and to demonstrate the concept and the expected results.

\begin{exa}{\normalfont For the closest prime to $10^{3}$ lower bound stated in \hyperlink{thm3355DD.850A}{Theorem} \ref{thm3355DD.850A} is not meaningful because 
\begin{equation}
p\leq (\log p)^2(\log \log p)^5.
	\end{equation}
However, it demonstrates the abundance of small consecutive pairs of primitive roots.	The parameters are these: 

		\begin{enumerate}[font=\normalfont, label=(\alph*)]			
			\item $\displaystyle p=1009,$ \tabto{8cm}prime,
\item $\displaystyle p-1=2^4\cdot 3^2\cdot 7,$ \tabto{8cm}factorization,			
			\item $\displaystyle x=(\log p)^{2}(\log_2 p)^5=1,294.24,$\tabto{8cm}upper bound,\\
			
			\item $\displaystyle N_2(x)\gg(\log p)^2(\log_2 p)^3\gg346.04,$\tabto{8cm}predicted number in \eqref{eq9955.400v}.
			
		\end{enumerate}
		\vskip .1 in
The predicted total number of pairs in the finite field $\mathbb{F}_p$ is
\begin{align}\label{eq3355NE.120}
N_2(p)&=c_2(p)\left (\frac{\varphi(p-1)}{p-1}\right )^2\cdot (p-1)+O(p^{1/2+\varepsilon})\\[.2cm]
&=\left (\frac{2^3\cdot6\cdot 6}{1008}\right )^2\cdot 1008+\text{ Error}\nonumber \\[.2cm]
&=82+\text{ Error}\nonumber,
\end{align}
where $c_2(p)>0$ is a constant. The actual count is 84 pairs, listed on the \autoref{table:t100-1}.

\begin{table}[H]
\centering
\vspace{2mm}
\small
\begin{tabular}{|c|c|c|c|c|c|}
	\hline
(11, 12) & (134, 135) & (251, 252) & (395, 396) & (588, 589) & (786, 787) \\
(14, 15) & (149, 150) & (263, 264) & (410, 411) & (597, 598) & (795, 796) \\
(34, 35) & (155, 156) & (275, 276) & (419, 420) & (612, 613) & (816, 817) \\
(39, 40) & (173, 174) & (302, 303) & (431, 432) & (621, 622) & (828, 829) \\
(47, 48) & (179, 180) & (305, 306) & (443, 444) & (648, 649) & (834, 835) \\
(53, 54) & (191, 192) & (314, 315) & (482, 483) & (660, 661) & (852, 853) \\
(72, 73) & (212, 213) & (323, 324) & (491, 492) & (669, 670) & (858, 859) \\
(83, 84) & (221, 222) & (338, 339) & (516, 517) & (684, 685) & (873, 874) \\
(107, 108) & (230, 231) & (347, 348) & (525, 526) & (693, 694) & (876, 877) \\
(110, 111) & (236, 237) & (359, 360) & (564, 565) & (702, 703) & (897, 898) \\
(131, 132) & (386, 387) & (576, 577) & (705, 706) & (732, 733) & (900, 901) \\ 
(744, 745) & (756, 757) & (771, 772) & (777, 778) & (924, 925) & (935, 936) \\
(954, 955) & (960, 961) & (968, 969) & (973, 974) & (993, 994) & (994, 995) \\
	\hline
\end{tabular}
\caption{Complete Set of Consecutive of Pairs Primitive Roots mod $p=1009$}
\label{table:t100-1B}
\end{table}

	}
\end{exa}
\vskip .25 in

\begin{exa}{\normalfont For the closest prime to $10^{12}$ lower bound stated in \hyperlink{thm3355DD.850A}{Theorem} \ref{thm3355DD.850A} begins to fall below $\ll p^{1/4}$. This is meaningful because 
\begin{equation}
(\log p)^2(\log \log p)^5<10p^{1/4}
	\end{equation}
surpasses the established upper bound. The parameters are these: 
		\begin{enumerate}[font=\normalfont, label=(\alph*)]
			
			\item $\displaystyle p=1000000000039,$ \tabto{8cm}prime,
\item $\displaystyle p-1=2\cdot 3\cdot 13\cdot17\cdot29\cdot 26005097,$ \tabto{8cm}factorization,			
			\item $\displaystyle x=(\log p)^{2}(\log_2 p)^5= 12669.61,$\tabto{8cm}upper bound,
			
			\item $\displaystyle N_2(x)\gg(\log p)^2(\log_2 p)^3\gg7601.76,$\tabto{8cm}predicted number in \eqref{eq9955.400v}.
		\end{enumerate}
		\vskip .1 in
The predicted total number of pairs in the finite field $\mathbb{F}_p$ is
\begin{align}
N_2(p)&=c_2(p)\left (\frac{\varphi(p-1)}{p-1}\right )^2\cdot (p-1)+O(p^{1/2+\varepsilon})\\[.2cm]
&=\left (\frac{ 279606792192}{1000000000039}\right )^2\cdot1000000000039+\text{ Error}\nonumber \\[.2cm]
&=78179958237+\text{ Error}\nonumber,
\end{align}
where $c_2(p)>0$ is a constant, see \eqref{eq3355NE.100d}. 
The first 114 consecutive pairs of primitive roots 
\begin{equation}
2\leq u, \;u+1\leq 12670
\end{equation}
are listed on \autoref{table:t100-2}. This small sample illustrates the profusion of consecutive pairs of primitive roots smaller than the stated upper bound.
\begin{table}[H]
\centering

\begin{tabular}{|l|l|l|l|l|l|}
\hline
(46, 47) & (47, 48) & (105, 106) & (188, 189) & (299, 300) & (326, 327) \\ \hline
(348, 349) & (371, 372) & (375, 376) & (381, 382) & (401, 402) & (420, 421) \\ \hline
(421, 422) & (422, 423) & (429, 430) & (430, 431) & (431, 432) & (432, 433) \\ \hline
(437, 438) & (456, 457) & (479, 480) & (497, 498) & (542, 543) & (569, 570) \\ \hline
(582, 583) & (660, 661) & (661, 662) & (662, 663) & (688, 689) & (706, 707) \\ \hline
(772, 773) & (773, 774) & (780, 781) & (781, 782) & (782, 783) & (804, 805) \\ \hline
(860, 861) & (885, 886) & (886, 887) & (887, 888) & (906, 907) & (918, 919) \\ \hline
(919, 920) & (923, 924) & (942, 943) & (945, 946) & (964, 965) & (971, 972) \\ \hline
(1005, 1006) & (1006, 1007) & (1018, 1019) & (1034, 1035) & (1038, 1039) & (1050, 1051) \\ \hline
(1055, 1056) & (1091, 1092) & (1092, 1093) & (1110, 1111) & (1111, 1112) & (1126, 1127) \\ \hline
(1131, 1132) & (1149, 1150) & (1153, 1154) & (1154, 1155) & (1204, 1205) & (1226, 1227) \\ \hline
(1227, 1228) & (1228, 1229) & (1236, 1237) & (1253, 1254) & (1319, 1320) & (1349, 1350) \\ \hline
(1356, 1357) & (1365, 1366) & (1366, 1367) & (1389, 1390) & (1454, 1455) & (1464, 1465) \\ \hline
(1465, 1466) & (1477, 1478) & (1478, 1479) & (1484, 1485) & (1487, 1488) & (1504, 1505) \\ \hline
(1535, 1536) & (1536, 1537) & (1537, 1538) & (1541, 1542) & (1554, 1555) & (1560, 1561) \\ \hline
(1578, 1579) & (1593, 1594) & (1643, 1644) & (1656, 1657) & (1695, 1696) & (1696, 1697) \\ \hline
(1701, 1702) & (1739, 1740) & (1770, 1771) & (1775, 1776) & (1786, 1787) & (1787, 1788) \\ \hline
(1788, 1789) & (1801, 1802) & (1814, 1815) & (1821, 1822) & (1857, 1858) & (1889, 1890) \\ \hline
(1908, 1909) & (1909, 1910) & (1910, 1911) & (1919, 1920) & (1946, 1947) & (1968, 1969) \\ \hline
\end{tabular}
\caption{Smallest 114 Consecutive Pairs of Primitive Roots Modulo $10^{12} + 39$}
\label{table:t100-2}
\end{table}
		
	}
\end{exa}
\newpage
\section{Problems}
Several interesting problems of different level of complexities are presented in this section. The range of difficulty ranges from easy to very difficult.

\subsection{Least Consecutive Primitive Roots In Prime Finite Fields}
\begin{exe} { \normalfont 
Verify that $u_1, u_2, \ldots, u_6=2,3,4,5,6,7$ cannot be a 6-tuple of consecutive primitive roots for any prime. Explain the limitation on the maximal $k$-tuple of consecutive primitive roots in a finite field $\F_p$. }
\end{exe}
\vskip .15in 
\begin{exe} { \normalfont 
Assume that a primitive root is an uniformly distributed and independent random variable with probability $\varphi(p-1)/(p-1)$ in a finite field $\F_p$. Show that the prime $p>e^{e^e}$ cannot support $k>\log p$ consecutive primitive roots. For example, for any prime $p$ such that $\log\log\log p>1$.}
\end{exe}
\vskip .15in 
\begin{exe} { \normalfont 
Let $p$ be a prime , let $x\ll(\log p)^b$ and let $z\gg p^a$, where $a>0$ and  $b\geq0$ are constants. Explain why the distribution of primitive roots in small intervals $[2,x]$ and the distribution on larger intervals $[z,2z]$ are different.}
\end{exe}
\vskip .15in
\begin{exe} { \normalfont 
Let $p$ be a prime and let $x<p$ be a real number. What is the maximal run of consecutive primitive roots $2\leq u, u+1, \ldots, u+m \bmod p$? Develop the analytic and algebraic theory .}
\end{exe}

\vskip .15in
\subsection{Least Consecutive Primitive Roots In Finite Rings}
\begin{exe} { \normalfont 
Let $p$ be a prime and let $p^m$ be a prime power with $m\geq2$. Generalize the concept of consecutive pair of primitive roots to the finite ring $\Z/p^m\Z$. }
\end{exe}
\vskip .15in
\begin{exe} { \normalfont 
Let $p$ be a prime and let $p^m$ be a prime power with $m\geq2$. Generalize the concept of consecutive $k$-tuples of primitive roots to the finite ring $\Z/p^m\Z$. What is the maximal run of $k$ consecutive primitive roots $2\leq u, u+1, \ldots, u+m \bmod p^m$? Develop the analytic and algebraic theory .}
\end{exe}
\vskip .15in
\subsection{Small Consecutive Primitive Roots In Finite Field Extensions}
\begin{exe} { \normalfont 
Let $q=p^m$ be a prime power and let $\F_{q}$ be a finite extension of $\F_p$, with $m\geq2$. Generalize the concept of small consecutive pair of primitive roots to the finite fields $\F_{q}$. }
\end{exe}
\vskip .15in
\begin{exe} { \normalfont 
Let $q=p^m$ be a prime power and let $\F_{q}$ be a finite extension of $\F_p$, with $m\geq2$. What is the maximal run of $k$ consecutive primitive roots $2\leq u, u+1, \ldots, u+m \bmod p^m$ in $\F_{q}$? Develop the analytic and algebraic theory .}
\end{exe}



\newpage
\begin{thebibliography}{998}
	

\bibitem{BS2007} Banks, W. D.; Shparlinksi, I. E. \textit{\color{red}Exponential sums with polynomial values of the discrete logarithm. } Unif. Distrib. Theory 2, No. 2, 67--72 (2007). \href{https://zbmath.org/1162.11039}{Zbl 1162.11039}.
	
	
	
	

\bibitem{CL1956} Carlitz, L. \textit{\color{red}Sets of primitive roots.} Compositio Mathematica. 13 (1956), 65--70. \href{https://zbmath.org/0071.26903}{Zbl 0071.26903}.

\bibitem{CN2017}Carella, N. \textit{\color{red}Densities of Primes and Primitive Roots.} \href{http://arxiv.org/abs/1707.06517}{Arxiv 1707.06517}.

\bibitem{CN2019}Carella, N. \textit{\color{red}Configurations of Consecutive Primitive Roots.} \href{http://arxiv.org/abs/1910.02308}{Arxiv 1910.02308}.


\bibitem{CZ1998} Cobeli, C.; Zaharescu, A. \textit{\color{red}
On the distribution of primitive roots mod $p$.}
Acta Arith. 83 (1998), no. 2, 143--153. \href{https://zbmath.org/0892.11003}{Zbl 0892.11003.}

\bibitem{CC2008} Cristian C. \textit{\color{red}On the discrete logarithm problem.} \href{http://arxiv.org/abs/0811.4182}{Arxiv.0811.4182}. 
 

	
	\bibitem{DH2000} Davenport, H. \textit{\color{red}Multiplicative number theory.} Grad. Texts in Math., 74 Springer-Verlag, New York, 2000
	\href{https://mathscinet.ams.org/mathscinet-getitem?mr=MR1790423}{MR1790423}.
	
\bibitem{DH1937} Davenport, H. \textit{\color{red}On Primitive Roots in Finite Fields.} Q. J. Math., Oxf. Ser. 8, 308--312 (137).
\href{https://zbmath.org/0018.10901}{Zbl 0018.10901}.	
	
	\bibitem{DP2016} Dusart, P. \textit{\color{red}Estimates of some functions over primes, without R.H..} Math. Comp. 85 (2016), no. 298, 875--888. \href{http://arxiv.org/abs/1002.0442}{Arxiv 1002.0442},
	\href{https://mathscinet.ams.org/mathscinet-getitem?mr=MR3434886}{MR3434886}.


\bibitem{GD2012} Gibson, D. J. \textit{\color{red}Discrete logarithms and their equidistribution.} Unif. Distrib. Theory 7, No. 1, 147--154 (2012). \href{https://zbmath.org/1313.11005}{Zbl 1313.11005}.
	
	

	

\bibitem{LE1927} Landau, E. \textit{\color{red}Vorlesungen uber Zahlentheorie: Vol.: 2. Aus der analytischen und geometrischen Zahlentheorie.}
Chelsea Publishing Co., New York, 1969, [1927].
\href{https://mathscinet.ams.org/mathscinet-getitem?mr=MR0250844}{MR0250844}.

\bibitem{LD2015} Liu, H.; Dong, H. \textit{\color{red}On the distribution of consecutive square-free primitive roots modulo $ p$.} Czechoslovak Math. J. 65(140) (2015), no. 2, 555--564. \href{https://zbmath.org/1363.11090}{Zbl 1363.11090}.

\bibitem{LN1997} Lidl, R.; Niederreiter, H. \textit{\color{red}Finite fields.} Second edition. Encyclopedia of Mathematics and its Applications, 20. Cambridge University Press, Cambridge, 1997.
\href{https://mathscinet.ams.org/mathscinet-getitem?mr=MR1429394}{MR1429394}.


\bibitem{LR2022} Lemos, A.; Neumann, V.; Ribas, S. \textit{\color{red}On arithmetic progressions in finite fields.} \href{http://arxiv.org/abs/2208.02876}{Arxiv 2208.02876 }.






	
	
	
	
\bibitem{SM1975} Szalay, M. \textit{\color{blue}On the distribution of the primitive roots of a prime.} J. Number Theory  7  (1975), 184--188.	

	
	

\bibitem{TT2013} Tanti, J.; Thangadurai, R. \textit{\color{red}Distribution of residues and primitive roots}. Proc. Indian Acad. Sci. Math. Sci.  123  (2013),  no. 2, 203--211. 


\bibitem{VE1968}Vegh, E. \textit{\color{red}Pairs of consecutive primitive roots modulo a prime}. Proc. Amer. Math. Soc.  19  (1968), 1169--1170.			

\bibitem{WR2001} Winterhof, A. \textit{\color{red}Character sums, primitive elements, and powers in finite fields.} J. Number Theory 91, 2001, no. 1, 153--163. \href{https://mathscinet.ams.org/mathscinet-getitem?mr=MR1869323}{MR1869323}.	
	
\end{thebibliography}
\end{document}